\theoremstyle{plain}
\newtheorem{theorem}{Theorem}[]
\newtheorem{corollary}[theorem]{Corollary}
\newtheorem{proposition}[theorem]{Proposition}
\theoremstyle{definition}
\newtheorem{example}[theorem]{Example}
\newtheorem{definition}[theorem]{Definition}
\newtheorem{remark}[theorem]{Remark}
\theoremstyle{remark}
\numberwithin{equation}{section}
\email{shafiq\_ur\_rahman2@yahoo.com, shafiq@cuiatk.edu.pk}
\email{raheelatahir.cui@gmail.com}
\email{farhatnoor.cuiatk@gmail.com}
\begin{document}

\title[Some Properties of Order-Divisor Graphs of  Finite Groups]{ Some Properties of Order-Divisor Graphs of  Finite Groups}


\author{Shafiq ur Rehman, Raheela Tahir, Farhat Noor }

\address{}
\address{{\em (Rehman, Tahir, and Noor)  COMSATS University Islamabad, Attock Campus,
Pakistan.}}

\thanks{2010 Mathematics Subject Classification: 05C25.}
\thanks{Key words and phrases: Order-divisor graph,  abelian group, cyclic group,  dihedral group, complete graph, bipartite graph,
star graph, girth, degree, size, Euler's totient function.}
\thanks{}

\begin{abstract}
This article investigates the properties of order-divisor graphs associated with finite groups. An order-divisor graph of a finite group is an undirected graph
in which the set of vertices includes all elements of the group, and two distinct vertices with different orders are adjacent if the order of one vertex
divides the order of the other. We prove some beautiful results in order-divisor graphs of finite groups. The primary focus is on examining the girth,
degree of vertices, and size of the order-divisor graph.
 In particular, we provide a comprehensive description of these parameters for the order-divisor graphs of finite cyclic groups and dihedral groups.
\end{abstract}

\maketitle


\section[Introduction]{Introduction}
Algebra and graph theory have a wide range of real-world applications. In group theory, for example, the study of symmetry finds relevance in areas such as crystal structures, artistic designs, and musical compositions. Theoretical computer science relies on different algebraic structures, while lattice theory aids in the formulation of semantic models. Universal algebra is crucial for the formalization of data types, and category theory forms the basis of type theory.

Graph theory, which focuses on the study of graphs made up of vertices and edges, is extensively used in computer science disciplines, including data mining, image processing, clustering, and network analysis. Graph coloring techniques are vital for addressing challenges in resource distribution and task scheduling. Additionally, the notions of paths, walks, and circuits in graph theory provide solutions to complex problems like the traveling salesman, database architecture, and resource network optimization.

Cayley's Theorem in group theory serves as an inspiration for algebraic combinatorics.
Group action involves a dynamic interaction with an object, resulting in the partitioning of its elements into distinct orbits.
Analyzing the structure and number of these orbits leads to significant
 combinatorial insights. Both algebraic combinatorics and graph theory have important applications in computer science,
 including automata theory, complexity theory, and Polya enumeration theory.

The relationship between groups and graphs is inspired by concepts such as the Cayley graph of a group and the automorphism group of a graph.
The Cayley graph $Cay(X:G)$ of a finite group $G$ with a generating set $X$ is a directed graph where each element of $G$ represents a vertex.
There is an arc from a vertex $a$ to a vertex $b$ iff there exists an $x \in X$ such that $ax = b$.
The automorphism group of a graph consists of all possible graph automorphisms, which form a group under function composition.
Relevant literature on these topics includes sources \cite{FB, F, L, L2, MPA}.

Numerous approaches can be used to connect a group  to an undirected graphs. A significant amount of
scholarly work is dedicated to investigating graphs that correspond to finite groups,  including  commuting graphs \cite{AMRR, ASH, BBPR, R, P},
non-commuting graphs \cite{AAM},
intersection graphs \cite{AHM}, prime graphs \cite{BC1, K, Lu}, power graphs \cite{C1, C2, MAN}, inverse graphs \cite{ZA}, equal-square graphs \cite{R1},
and the order-divisor graphs [21]. The order-divisor graph of a finite group $G$,  denoted by $OD(G)$,    is a graph whose
vertex set consist of all elements of $G$ such that two distinct vertices with different orders are adjacent provided that order of one vertex
divides the other.  It represents a novel approach to depicting groups through graphs.

To establish a foundational understanding, we recall the following concepts:
The collection of all symmetries of a regular polygon with $n$ sides (where $n \geq 3$) forms a group called the
dihedral group, denoted by $\mathcal{D}_n$, with order $2n$. The dihedral group $\mathcal{D}_n$ can be defined
using the generators $\alpha$ and $\beta$ with the relation
$\alpha^n =e$,  $\beta^2 =e$, and  $(\alpha \beta)^2 = e$.
A cyclic group is characterized by being generated by a single element.
A finite cyclic group of order $n$ is isomorphic to the group of residue classes modulo $n$, denoted by  $\mathbb{Z}_n$, under addition.
The group of units  $U(n)$, consists of elements $\bar{x} \in \mathbb{Z}_n$ for which $(x,n) = 1$.
For each prime number $p$,  a group is termed as $p$-group if each of its element has order a power of $p$.
The exponent of a group $G$ is the least positive integer $k$ for which $g^k = e$ $\forall g \in G$.
A group $G$ is termed as elementary abelian group (or an elementary abelian
$p$-group) if it is a $p$-group with
exponent $p$, meaning $x^p = e$ for every $x \in G$. A finite elementary abelian group is isomorphic to ${\mathbb{Z}_{p}}^n$ (where
$p$ is a prime and  $n$ is a positive integer). A simple graph is an undirected graph without loops or parallel edges.
A complete bipartite graph is one where each vertex in one partite set
is adjacent to every vertex in the other partite set.
The star graph $S_n$ of order $n$ is a tree in which one vertex has degree $n-1$ and the remaining vertices have degree
$1$, i.e., $S_n$ being isomorphic to $K_{1,n}$.

In this paper, the following outcomes are achieved: Let $G$ denotes the group.
  $OD(G)$ is a star graph  iff all elements of $G$, excluding the identity, have prime orders iff
$OD(G)$ contains no cycles iff $OD(G)$ is bipartite (Theorem \ref{2}).  If  $|G|$  is prime then $OD(G)$ form a star graph
(Corollary \ref{a}). $OD(G)$ is a path graph  iff $|G|$ $=$ $2$ or $3$ (Corollary \ref{b}). 	
If $OD(G)$ contains a cycle, then$g(OD(G))$ $=$ $3$ (Theorem \ref{3}). 	
$G$ contains an element with a composite order iff $g\big(OD(G)\big)$ is $3$ (Theorem \ref{1}). If $G$ is cyclic, then $g(OD(G))=3$ iff $G$ has composite order
(Corollary \ref{c}). If $G$ is a finite abelian group such that $|G|$ is divisible by two different primes, then $g\big(OD(G)\big) = 3$ (Corollary \ref{d}).
If  $|G| = pq$, where $p$, $q$ are primes, then $g\big(OD(G)\big) = 3$ iff $G$ is cyclic (Corollary \ref{e}).
For the dihedral group $\mathcal{D}_n$,  $OD(\mathcal{D}_n)$  has girth $3$ iff $n$ is composite (Corollary \ref{f}).
If  $E$ and $F$ are  groups then $g\big(OD(E\oplus F)\big) = 3$ iff at least one of the following is true:
(a) $E$ has an element of composite order, (b) $F$ has an element of composite order, (c) There are elements $x \in E$ and $y \in F$  whose orders are
 are different primes (Corollary \ref{g}). If $E$ and $F$ are cyclic groups then $g\big(OD(E\oplus F)\big) = 3$ iff at least one of the following is true:
	(a) $|E|$ is composite, (b) $|F|$ is composite, (c) $|E|$ and $|F|$ are distinct primes (Corollary \ref{h}). If $x \in \mathbb{Z}_n$
has order $m$ then $deg(x) = m - 2\phi(m) +  \sum_{\lambda \rvert \frac{n}{m}}\phi(\lambda m)$(Theorem \ref{4}).
For every  prime $p$ and each positive integer $k$,
if $x \in \mathbb{Z}_{p^k}$ has order $p^i$,
then $deg(x)=p^k + p^{i-1} - p^i$ for  $i\geq1$ and $deg(x)=p^k-1$ for $i = 0$ (Corollary \ref{6}). For every prime $p$ and each positive integer $k$,
$\sum_{x\in \mathbb{Z}_{p^k}}	deg(x)$ $=$ $\frac{2 p^{2k}-2}{p+1}$ (Corollary \ref{7}).
For each positive integer $k$, $\sum_{x\in \mathbb{Z}_{2^k}}	deg(x)$ $=$ $\frac{ 2^{2k+1}-2}{3}$  (Corollary \ref{9}).
For  every prime $p$ and each positive integer $k$,  $\sum_{x\in\mathbb{Z}_{p^k}} o(x) = \frac{p^{2k+1}+1}{p+1}$  (Corollary \ref{10}).
For each positive integer $k$,  $\sum_{x\in\mathbb{Z}_{2^k}} o(x)=\frac{2^{2k+1}+1}{3}$ (Corollary \ref{11}).
For each positive integer $k$, $1+\sum_{x\in \mathbb{Z}_{2^k}}deg(x)$ $=$ $\sum_{x\in \mathbb{Z}_{2^k}} o(x)$  (Corollary \ref{12}).
The  size  of  $OD\big(\mathbb{Z}_n\big)$ is given by  $|E\big(OD(\mathbb{Z}_n)\big)|=\frac{1}{2}\big[\sum_{m \rvert n}\big(m+\sum_{\lambda|\frac{n}{m}}\phi
(\lambda m)-2\phi(m)\big)\phi(m)\big]$ (Theorem \ref{13}). For each prime $p$ and the positive integer $k$, the size of $OD(\mathbb{Z}_{p^k})$ is given by
$\frac{p^{2k}-1}{p+1}$ (Corollary \ref{14}). If $n \geq 3$ is an odd  integer and $x \in \mathcal{D}_n$ has order $m$,  then
$deg(x)=	m - 2\phi(m) + \sum_{\lambda \rvert \frac{n}{m}}\phi(\lambda m)$ for  $m\textgreater 2$, $deg(x)=2n-1$ for  $m = 1$, and
$deg(x)=1$ for $m = 2$ (Theorem \ref{5}). If $n \geq 3$ is an even  integer and $x \in \mathcal{D}_n$ has order $m$, then
$deg(x)=m - 2\phi(m) + \sum_{\lambda \rvert \frac{n}{m}}\phi(\lambda m)$ for  $m$ is odd,
$deg(x)=\sum_{\lambda \rvert \frac{n}{2}}\phi(2 \lambda )$ for $m = 2$,
$deg(x)=n+m - 2\phi(m) + \sum_{\lambda \rvert \frac{n}{m}}\phi(\lambda m)$ for   $m \textgreater 2$ and even, and
$deg(x)=2n-1$ for if $m=1$ (Theorem  \ref{8}). If $n \geq 3$ is an odd  integer, then
the size  of  $OD\big(\mathcal{D}_n\big)$ is given by
$|E(OD(\mathcal{D}_n))|=\frac{1}{2}\big[(3n-1+\sum_{\substack {m|n, m>1}}\big(m-
2\phi(m)+\sum_{\lambda|\frac{n}{m}}\phi(\lambda m)\big)\phi(m)\big]$ (Theorem \ref{15}).
If $n \geq 3$ is an even  integer,  then
the size  of  $OD\big(\mathcal{D}_n\big)$ is given by
$|E(OD(\mathcal{D}_n))|=\frac{1}{2}\big[2n-1+(n+1)\sum_{\lambda \rvert \frac{n}{2}}
\phi(2 \lambda )+\sum_{\substack {m > 1, m \mid n\\ m~is~odd}}\big(m-2\phi(m)+
\sum_{\lambda|\frac{n}{m}}\phi(\lambda m)\big)\phi(m)+ \sum_{\substack {m > 2, m \mid n\\ m~is~even}}\big(n+m-
2\phi(m)+\sum_{\lambda|\frac{n}{m}}\phi(\lambda m)\big)\phi(m)\big]$ (Theorem \ref{16}).
In the final section, we provide PARI/GP codes in the PARI/GP software
\cite{P} to calculate the degree of each vertex and the size within the order-divisor graph of
the finite cyclic group $\mathbb{Z}_n$ and the dihedral group $\mathcal{D}_n$.

Unless otherwise specified, all groups and graphs discussed in this paper are assumed to be finite. Any concepts or results not detailed here can be understood as
standard, as referenced in \cite{G}, \cite{KHR} and \cite{W}.
\section{Properties of Order-divisor graph $OD(G)$}

According to \cite{RBIK}, the order-divisor graph of a finite group $G$ is a graph where the vertices represent the elements of $G$ such that two
distinct vertices $a$ and $b$ are adjacent if their orders are unequal and the order of one vertex divides the order of the other.
This graph is symbolized by $\b{OD}(G)$.
\\

Some immediate consequences about the order-divisor graphs are listed below .
\begin{enumerate}
\item[(a)] The graph $OD(G)$ is a simple graph.

\item[(b)] The radius of $OD(G)$ is $1$, and its diameter is $2$.

\item[(c)] For $|G|>2$, $OD(G)$ cannot be a complete graph.

\item[(d)] $OD(G)$ cannot form a cycle.
	
\end{enumerate}
We summarize here some basic facts related to $OD(G)$ from \cite{RBIK}.
\begin{proposition}{\em (\cite[Theorem 3, Corollary 4]{RBIK})}
The following statements are true for a group $G$.
\begin{enumerate}
\item[(a)]$OD(G)$ is a star graph iff each non-identity element in $G$ has prime order.

\item[(b)] If $G$ is abelian, then $OD(G)$ is star iff $G$ is elementary abelian.
\end{enumerate}
\end{proposition}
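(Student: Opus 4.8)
The plan is to exploit the fact that the identity $e$ is a universal vertex of $OD(G)$: since $o(e)=1$ divides the order of every element while $o(e)\neq o(x)$ for all $x\neq e$, the vertex $e$ is adjacent to each of the remaining $|G|-1$ vertices. Consequently, whenever $OD(G)$ is a star it must have $e$ as its center, and the entire content of part (a) reduces to controlling the edges \emph{among} the non-identity vertices.

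For the forward implication of (a) I would argue contrapositively. Suppose some non-identity $x$ has composite order $m$; then $m$ admits a divisor $d$ with $1<d<m$, and the cyclic subgroup $\langle x\rangle$ of order $m$ contains an element $y$ of order $d$. Since $d\mid m$, $d\neq m$, and both $x,y$ are non-identity, the vertices $x$ and $y$ are adjacent, so $x$ has a neighbor other than $e$ and $OD(G)$ is not a star. Hence if $OD(G)$ is a star, every non-identity element has order that is neither $1$ nor composite, i.e. prime. For the converse, assume every non-identity element has prime order and take distinct non-identity vertices $x,y$ with prime orders $p,q$. If $p=q$ their orders coincide, and if $p\neq q$ neither divides the other; in either case $x$ and $y$ are non-adjacent. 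Therefore the only edges are those incident to $e$, each non-identity vertex has degree $1$, and $OD(G)\cong K_{1,|G|-1}$ is a star.

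Part (b) then becomes a purely group-theoretic statement via (a): for abelian $G$, I must show that ``every non-identity element has prime order'' is equivalent to ``$G$ is elementary abelian.'' One direction is immediate, since in $G\cong\mathbb{Z}_p^{\,n}$ every non-identity element has order $p$. For the other direction I would suppose, toward a contradiction, that two non-identity elements $x,y$ have distinct prime orders $p\neq q$. Because $G$ is abelian and $\gcd(p,q)=1$, we have $\langle x\rangle\cap\langle y\rangle=\{e\}$, so $xy\neq e$ and $o(xy)=\operatorname{lcm}(p,q)=pq$, a composite order, contradicting that all non-identity elements have prime order. Hence a single prime $p$ occurs, every element satisfies $x^p=e$, and $G$ is a $p$-group of exponent $p$, i.e. elementary abelian.

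I expect the only genuinely delicate point to be the neighbor-construction in the forward direction of (a): one must invoke the subgroup structure of the cyclic group $\langle x\rangle$ to produce an element of each order dividing $m$, which is where the argument really uses group theory rather than pure counting. The abelian product computation in (b) is the second place where care is needed, since it relies on the coprimality of the two prime orders to guarantee $o(xy)=pq$ and on $xy\neq e$ (which follows as $o(x)\neq o(y)$ rules out $y=x^{-1}$).
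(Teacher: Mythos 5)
Your argument is correct and complete. Note, however, that the paper itself offers no proof of this proposition: it is quoted verbatim from \cite{RBIK} (Theorem 3 and Corollary 4 there), so there is nothing internal to compare against. What you have written is a valid standalone proof: the observation that $e$ is a universal vertex forces any star to be centered at $e$; the element $y=x^{m/d}$ of order $d$ inside $\langle x\rangle$ correctly produces an edge between two non-identity vertices when some order $m$ is composite; the converse correctly notes that two prime orders are adjacent only if one divides the other, which forces equality and hence non-adjacency; and in (b) the product $xy$ of commuting elements of coprime prime orders $p\neq q$ has order $pq$ (with $xy\neq e$ since $x^{-1}$ has order $p$), which pins the group down to exponent $p$ for a single prime, i.e.\ elementary abelian. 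The only cosmetic caveat is the degenerate case $|G|\le 2$, where ``star'' should be interpreted as $K_{1,|G|-1}$; your argument handles this trivially since such groups have no elements of composite order.
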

\begin{proposition}{\em (\cite[Corollary 6, Corollary 7, Theorem 9, Corollary 13]{RBIK})}
\\The following assertions hold for the groups $\mathbb{Z}_n$, $U(n)$ and $D_n$.
\begin{enumerate}
\item [(a)]The graph $OD(U(n))$ is a star graph $S_{\phi(n)}$ iff $n$ divides $24$.
\item [(b)] The graph $OD(\mathbb{Z}_n)$ is a star graph iff $n$ is a prime number.
\item [(c)] The graph $OD(\mathcal{D}_n)$ is a star graph $S_{2n}$ iff $n$ is a prime,\\ where $\mathcal{D}_n$ (for $n \geq 3$) represents the dihedral group of order $2n$.	
\item[(d)]  The chromatic number of $OD(\mathbb{Z}_n)$ is $n + 1$.
\end{enumerate}
\end{proposition}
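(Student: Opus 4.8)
The plan is to reduce parts (a), (b) and (c) to the structural criterion recalled just above---that $OD(G)$ is a star graph precisely when every non-identity element of $G$ has prime order---and then to read off the answer for each of the three families of groups; part (d) I would handle separately by a direct colouring argument. Throughout I use that, in $OD(G)$, two elements of equal order are never adjacent, so the only way a star can fail is the presence of an edge between two non-identity vertices, which happens exactly when two non-identity orders are distinct and one divides the other.

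For (b) and (c) the work is short. In $\mathbb{Z}_n$ there is an element of order $d$ for every divisor $d \mid n$; a generator has composite order as soon as $n$ is composite, whereas for prime $n$ the only orders present are $1$ and $n$. Hence every non-identity element has prime order iff $n$ is prime, giving (b) (equivalently, this is immediate from the recalled fact that an abelian $OD(G)$ is a star iff $G$ is elementary abelian, since $\mathbb{Z}_n$ is elementary abelian iff $n$ is prime). For (c) I would split the $2n$ elements of $\mathcal{D}_n$ into the $n$ rotations, which form a copy of $\mathbb{Z}_n$ and thus realize every order dividing $n$, and the $n$ reflections, each of order $2$. The reflections alone never violate the prime-order condition, so $OD(\mathcal{D}_n)$ is a star iff the rotation subgroup contributes only prime orders, i.e. iff $n$ is prime; one then checks (using $n \geq 3$, so $n$ odd when prime) that the identity is adjacent to all $2n-1$ other vertices and no further edges occur, yielding $S_{2n}$.

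Part (a) is where the real work lies, and I expect the case analysis to be the main obstacle. Using the Chinese Remainder decomposition $U(n) \cong \prod_i U(p_i^{a_i})$ together with $U(2)\cong 1$, $U(4)\cong \mathbb{Z}_2$, $U(2^a)\cong \mathbb{Z}_2\times \mathbb{Z}_{2^{a-2}}$ for $a\geq 3$, and $U(q^a)\cong \mathbb{Z}_{q^{a-1}(q-1)}$ for odd primes $q$, I would first observe that in an abelian group elements of two distinct prime orders $p\neq q$ produce an element of composite order $pq$; hence "every non-identity element has prime order" forces $U(n)$ to be an elementary abelian $p$-group for a single prime $p$ (or trivial). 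The computational heart is then to pin down which $n$ make every local factor elementary abelian for a common $p$: the odd factors $U(q^a)$ are cyclic of order $q^{a-1}(q-1)$, which is elementary abelian only when that order is $1$ or prime, forcing $a=1$ and $q-1=2$, i.e. $q=3$; the $2$-part $U(2^a)$ is elementary abelian only for $a\leq 3$. Reconciling these constraints forces $p=2$, at most one factor of $3$, and at most $2^3$, that is $n \mid 24$; conversely a direct check confirms that $U(n)$ is an elementary abelian $2$-group (or trivial) for each divisor of $24$. Keeping this enumeration airtight---missing no admissible $n$ and admitting no inadmissible one---is the delicate point.

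For (d) I would note that in $OD(\mathbb{Z}_{p^n})$ any two elements of distinct orders $p^i,p^j$ with $i<j$ are automatically adjacent, since $p^i \mid p^j$, so the graph is complete multipartite with one part for each order $1,p,\dots,p^n$. Colouring each element by the exponent of its order is then a proper colouring using exactly $n+1$ colours, while the divisor chain $1 \mid p \mid \cdots \mid p^n$ gives a clique on $n+1$ vertices, so $\chi = n+1$. The same idea---colour each element by the number of prime factors of its order counted with multiplicity---shows for a general modulus that $\chi\big(OD(\mathbb{Z}_n)\big)=\Omega(n)+1$, where $\Omega(n)$ denotes that count, and this specializes to the stated value in the prime-power case.
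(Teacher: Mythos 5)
The paper offers no proof of this proposition at all: it is imported verbatim from \cite{RBIK} (Corollary~6, Corollary~7, Theorem~9, Corollary~13) as background, so there is nothing internal to compare your argument against. What you have written is a self-contained derivation, and it is essentially correct. Parts (b) and (c) follow exactly as you say from the criterion that $OD(G)$ is a star iff every non-identity element has prime order, together with the observation that for prime $n\geq 3$ the orders $2$ and $n$ are distinct primes neither dividing the other, so the reflections and rotations of $\mathcal{D}_n$ contribute no extra edges. Part (a) is indeed where the content lies; your reduction to ``$U(n)$ is elementary abelian,'' justified by the fact that two coprime prime orders in an abelian group multiply to give a composite order, and the subsequent local analysis ($U(q^a)$ cyclic of order $q^{a-1}(q-1)$ forces $a=1$ and $q=3$; $U(2^a)$ elementary abelian only for $a\leq 3$) correctly isolates the divisors of $24$, and the converse check over $n\in\{1,2,3,4,6,8,12,24\}$ closes the argument. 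One point worth flagging: as printed in the paper, part (d) asserts $\chi\big(OD(\mathbb{Z}_n)\big)=n+1$, which is absurd on its face (the graph has only $n$ vertices); it is evidently a misprint for $\mathbb{Z}_{p^n}$, and you have silently proved the corrected statement. Your complete-multipartite description of $OD(\mathbb{Z}_{p^n})$ with the $(n+1)$-clique coming from a maximal divisor chain is the right argument, and your generalization $\chi\big(OD(\mathbb{Z}_n)\big)=\Omega(n)+1$ is a clean strengthening that the paper does not record.
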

Recall \cite{W} that a graph is called bipartite if a set of graph vertices decomposed into
two disjoint sets such that no two graph vertices within the same set are adjacent.
A well known result about bipartite graph is that a graph is bipartite iff it contains no cycles of odd length.
Applying this result, we  determine  bipartite property for $OD(G)$. We describe some results about when $OD(G)$ is
a bipartite graph and also when $OD(G)$ is a star graph.
\begin{theorem}\label{2}
	For a group $G$, the following statements are equivalent:
\begin{enumerate}
	\item[(a)] $OD(G)$ is a star graph.
	\item[(b)] All elements of $G$, excluding the identity, have prime orders.
	\item[(c)] The graph $OD(G)$ contains no cycles.
	\item[(d)] The graph $OD(G)$ is bipartite.
\end{enumerate}
\end{theorem}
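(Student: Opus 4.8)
The plan is to prove the chain of implications $(a)\Rightarrow(c)\Rightarrow(d)\Rightarrow(b)\Rightarrow(a)$, which establishes the full equivalence. Throughout I will use the single structural observation that the identity $e$ is a \emph{universal vertex} of $OD(G)$: since $o(e)=1$ divides $o(g)$ for every $g$ and $1\neq o(g)$ whenever $g\neq e$, the vertex $e$ is adjacent to all other vertices. The implications $(a)\Rightarrow(c)$ and $(c)\Rightarrow(d)$ are the easy structural facts that a star graph is a tree and is therefore acyclic, and that any acyclic graph (a forest) contains no odd cycle and is hence bipartite; I would dispatch these in a line each. The equivalence $(a)\Leftrightarrow(b)$ is already recorded in the Proposition cited above, but since the rest of the argument hinges on the arithmetic of orders I would in any case reprove $(b)\Rightarrow(a)$ directly: if every non-identity element has prime order, then two distinct non-identity vertices $x,y$ with $o(x)=p$ and $o(y)=q$ prime are never adjacent, because either $p=q$ (equal orders, hence non-adjacent) or $p\neq q$ are distinct primes with $p\nmid q$ and $q\nmid p$; thus every edge is incident to $e$ and $OD(G)$ is the star centered at $e$.

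The heart of the proof is the following claim, which I would isolate as a lemma: \emph{$G$ possesses a non-identity element of composite order if and only if $OD(G)$ contains a triangle.} For the forward direction, let $x$ have composite order $m$, pick a proper divisor $d$ of $m$ with $1<d<m$, and use the fact that the cyclic subgroup $\langle x\rangle$ of order $m$ contains an element $y$ of order exactly $d$ (a cyclic group realizes every divisor of its order as an element order). Then $e,x,y$ are three distinct vertices with $o(e)=1$, $o(y)=d$, $o(x)=m$, and since $1\mid d\mid m$ with $1,d,m$ pairwise distinct, all three pairs are adjacent, giving a triangle. For the converse, suppose $a,b,c$ form a triangle. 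Adjacency forces the three orders to be pairwise unequal and pairwise comparable under divisibility, so after relabeling they form a strict chain $o(a)<o(b)<o(c)$ with $o(b)\mid o(c)$; as $o(b)$ strictly exceeds the smallest order it satisfies $o(b)\geq 2$, and $o(b)$ is then a divisor of $o(c)$ with $1<o(b)<o(c)$, so $o(c)$ is composite.

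With the lemma in hand, $(d)\Rightarrow(b)$ follows by contraposition: if $(b)$ fails then $G$ has a non-identity element of composite order, the lemma produces a triangle, i.e.\ an odd cycle, and $OD(G)$ is not bipartite, contradicting $(d)$. This closes the cycle $(a)\Rightarrow(c)\Rightarrow(d)\Rightarrow(b)\Rightarrow(a)$. I expect the main obstacle to be the two arithmetic sub-arguments inside the lemma, namely locating an element of the intermediate order $d$ (which relies on passing to the cyclic subgroup $\langle x\rangle$ rather than to $G$ itself, since $G$ need not contain elements of every divisor order) and the chain argument showing that the top order in any triangle is composite, together with a careful treatment of the degenerate cases $|G|\leq 2$, where $OD(G)$ is a single vertex or a single edge and is vacuously a star.
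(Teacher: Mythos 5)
Your proof is correct. It differs from the paper's in its decomposition: the paper proves $(a)\Leftrightarrow(b)$ by citation, declares $(a)\Leftrightarrow(c)$ and $(a)\Rightarrow(d)$ straightforward, and closes with a one-line argument for $(d)\Rightarrow(a)$ --- since the identity is a universal vertex, bipartiteness (no odd cycles) immediately forbids any edge between two non-identity vertices, forcing a star. You instead run the cycle $(a)\Rightarrow(c)\Rightarrow(d)\Rightarrow(b)\Rightarrow(a)$ and route the substantive implication $(d)\Rightarrow(b)$ through an arithmetic lemma (``$G$ has an element of composite order iff $OD(G)$ contains a triangle''), which is in fact precisely the content of the paper's later Theorem \ref{1}; you have effectively front-loaded that result, and your chain argument for the converse direction of the lemma (three pairwise comparable distinct orders form a divisibility chain, so the top order has a proper divisor exceeding $1$ and is composite) is a clean self-contained proof of it. What the paper's route buys is brevity --- the universal-vertex observation alone settles $(d)\Rightarrow(a)$ without any order arithmetic; what your route buys is independence from the cited reference (you reprove $(b)\Rightarrow(a)$ directly) and an explicit characterization of triangles that the paper only establishes later. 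Your attention to the degenerate cases $|G|\le 2$ and to the need to pass to the cyclic subgroup $\langle x\rangle$ (rather than $G$ itself) to realize the intermediate order $d$ are both points the paper glosses over.
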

\begin{proof}
(a) $\iff$ (b): Apply \cite[Theorem 3]{RBIK}
\\
(a) $\iff$ (c) and (a) $\implies$ (d) are straight forward.
\\
(d) $\implies$ (a) Suppose $OD(G)$ is bipartite.
Then $OD(G)$ cannot have an odd cycle. Since identity vertex is associated to each vertex, therefore no two non-identity vertices could be adjacent.
 Hence $OD(G)$ is a star graph.
\end{proof}

\begin{corollary}\label{a}
If  $G$ is a group whose order is a prime number, then $OD(G)$ form a star graph.
\end{corollary}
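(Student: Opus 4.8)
The plan is to derive this corollary directly from the equivalence established in Theorem~\ref{2}, specifically the implication (b)~$\Longrightarrow$~(a). The key observation is that when $|G|$ is prime, the structure of $G$ is almost entirely determined: by Lagrange's theorem, the order of every element divides $|G|$, and since $|G|=p$ is prime, the only possible element orders are $1$ and $p$.

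First I would note that the identity is the unique element of order $1$, so every non-identity element of $G$ must have order exactly $p$. Since $p$ is prime, this means every non-identity element has prime order, which is precisely condition (b) of Theorem~\ref{2}. Invoking the equivalence (b)~$\Longleftrightarrow$~(a) from that theorem then immediately yields that $OD(G)$ is a star graph, completing the argument.

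The proof is essentially a one-line deduction once Theorem~\ref{2} is in hand, so there is no genuine obstacle to overcome; the only point requiring care is the appeal to Lagrange's theorem to rule out composite or intermediate orders. I would present the argument compactly, emphasizing that the primality of $|G|$ forces all nontrivial element orders to coincide with the prime $p$, and that the conclusion is then an instance of the already-proven characterization rather than a fresh combinatorial analysis of the graph.
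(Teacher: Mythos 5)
Your proposal is correct and matches the paper's intended argument: the corollary is stated as an immediate consequence of Theorem~\ref{2}, and the route via Lagrange's theorem forcing every non-identity element to have order $p$ (hence prime order, i.e.\ condition (b)) is exactly the deduction the paper relies on. No discrepancy to report.
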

\begin{remark}
The converse statement of Corollary \ref{a} is false. For example $OD(D_3)$ is a star graph but $|D_3|$ $=$ $6$.
\end{remark}
\begin{corollary}\label{b}
The graph $OD(G)$ is a path graph iff $|G|$ $=$ $2$ or $3$.
\end{corollary}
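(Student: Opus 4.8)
The plan is to prove both directions of the biconditional, with everything hinging on one structural fact that I would record at the outset: the identity $e$ is a universal vertex of $OD(G)$. Indeed $o(e)=1$ is distinct from the order of every non-identity element and divides it, so $e$ is joined to all $|G|-1$ remaining vertices and $deg(e)=|G|-1$. This single observation drives both implications.

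For the reverse implication I would simply display the two graphs. If $|G|=2$, then $G=\{e,a\}$ with $o(a)=2$; the orders $1$ and $2$ differ and $1\mid 2$, so the two vertices are joined and $OD(G)\cong K_2=P_2$. If $|G|=3$, then $G\cong\mathbb{Z}_3=\{e,a,a^2\}$ with $o(a)=o(a^2)=3$; here $e$ is adjacent to both of the other vertices while $a$ and $a^2$ are non-adjacent (equal orders), so $OD(G)$ is the path $a-e-a^2\cong P_3$. Since $2$ and $3$ are prime, each order determines the group uniquely, so these two checks are exhaustive.

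For the forward implication, assume $OD(G)\cong P_m$. I would favour reusing the machinery already in place: a path contains no cycle, so Theorem \ref{2} promotes ``$OD(G)$ is a path'' to ``$OD(G)$ is a star,'' i.e. $OD(G)\cong K_{1,|G|-1}$. Such a star is itself a path only when it has at most three vertices, since for $|G|\ge 4$ its centre has degree at least $3$; this forces $|G|\le 3$. The same bound follows directly from the universal-vertex fact, which I would cite as the self-contained alternative: every vertex of a path has degree at most $2$, yet $deg(e)=|G|-1$, so $|G|-1\le 2$.

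The genuine content here is minimal, so the only thing to watch — the ``obstacle,'' such as it is — is the boundary bookkeeping at the bottom of the range. One must fix the convention that a path graph has at least two vertices, so that the trivial group $|G|=1$, whose order-divisor graph is a single isolated vertex, is correctly excluded, leaving exactly $|G|=2$ or $3$.
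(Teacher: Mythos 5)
Your proof is correct and follows essentially the same route as the paper: invoke Theorem \ref{2} to upgrade ``path'' to ``star,'' then observe a star is a path only with at most three vertices. Your extra touches (the explicit verification for $|G|=2,3$, the alternative degree bound $deg(e)=|G|-1\le 2$, and the exclusion of the trivial group) only make the argument more complete than the paper's version.
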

\begin{proof}
Let $OD(G)$ be a path graph. Then by Theorem \ref{2}, $OD(G)$ is a star graph. Therefore $OD(G)$ $=$ $P_2$ or $P_3$. Hence order of $G$ is $2$ or $3$.
\\
Conversely, suppose order of $G$ is $2$ or $3$. Then clearly $OD(G)$ is  $P_2$ or $P_3$.
\end{proof}
\section{The girth of the order-divisor graph $OD(G)$}
In this section, we will address the girth of order-divisor graph. Our main objective here is to calculate the girth by considering the order of group.   Moreover,
we determine the girth of order-divisor graph for both cyclic groups and the dihedral groups.
First result of this section shows that $OD(G)$ has girth $0$ or $3$.

\begin{theorem}\label{3}
If $OD(G)$ contains a cycle then $g(OD(G))$ $=$ $3$.
\end{theorem}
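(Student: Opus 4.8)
The plan is to show that whenever $OD(G)$ has any cycle at all, it must contain a triangle, which forces the girth to be exactly $3$ (since a simple graph has no cycle of length $1$ or $2$). The key structural fact I would exploit is the one already used in the proof of Theorem \ref{2}: the identity element $e$ has order $1$, which divides the order of every other element, and $e$ has order different from every non-identity element. Hence in $OD(G)$ the identity vertex is adjacent to \emph{all} other vertices. This universal adjacency of $e$ is the engine of the whole argument.

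First I would suppose $OD(G)$ contains a cycle. By the contrapositive of Theorem \ref{2}, parts (a) and (c), if $OD(G)$ contained no cycle it would be a star graph; so the existence of a cycle already tells us that $G$ has some element whose order is not prime, equivalently that some two non-identity vertices $a$ and $b$ are adjacent. By definition of $OD(G)$, adjacency of $a$ and $b$ means $o(a) \neq o(b)$ and one of $o(a)$, $o(b)$ divides the other. Now consider the three vertices $e$, $a$, $b$. Since $e$ is adjacent to both $a$ and $b$ by the universal-adjacency observation above, and $a$ is adjacent to $b$ by assumption, the vertices $e$, $a$, $b$ form a triangle, i.e. a cycle of length $3$.

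Finally I would conclude: $OD(G)$ is a simple graph, so it contains no loops or multiple edges and therefore no cycle of length less than $3$; since we have exhibited a $3$-cycle, the shortest cycle has length exactly $3$, giving $g(OD(G)) = 3$. The only subtle point to get right is the very first step — extracting a pair of \emph{adjacent non-identity} vertices from the mere hypothesis that a cycle exists. I expect this to be the main (though minor) obstacle, and the cleanest way to handle it is to invoke Theorem \ref{2}: a cycle cannot live inside a star graph, so $OD(G)$ is not a star, whence by the equivalence (a)$\iff$(b) not every non-identity element has prime order, and an edge between two non-identity vertices must exist. Everything after that is immediate from the fact that $e$ sees every vertex.
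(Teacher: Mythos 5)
Your argument is correct and is essentially the paper's own proof: the paper likewise observes that the identity vertex is adjacent to every other vertex, so a cycle forces an edge between two non-identity vertices and hence a triangle through $e$. You simply spell out the "either a star or a triangle" dichotomy in more detail by routing it through Theorem~\ref{2}.
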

\begin{proof}
Suppose $OD(G)$ contains a cycle. Since the vertex corresponding to identity is adjacent to every other vertex,
therefore either $OD(G)$ is a star graph or it contains a cycle of
length $3$. Hence $g(OD(G)) = 3$.
\end{proof}

According to Theorem \ref{2}, $OD(G)$ is cycle-free iff it is a star graph. Consequently, when $|G|$ is prime then $OD(G)$ is cycle-free, i.e.,  $g(OD(G))=0$.
 Therefore, we are keen on exploring the girth in instances where $|G|$ is composite.

\begin{theorem}\label{1}
A group $G$ contains an element with composite order iff $g\big(OD(G)\big)$ is $3$.
\end{theorem}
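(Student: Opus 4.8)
The plan is to deduce the result quickly from the two structural facts already in hand, namely Theorem~\ref{2} and Theorem~\ref{3}. Together these show that $g(OD(G))$ can take only the values $0$ or $3$: by Theorem~\ref{3} the presence of any cycle forces girth $3$, and conversely a graph has girth $3$ exactly when it contains a cycle (the shortest being a triangle). Hence the assertion reduces to the equivalence ``$G$ has an element of composite order $\iff$ $OD(G)$ contains a cycle,'' and the right-hand condition is precisely what Theorem~\ref{2} controls, since that theorem identifies the cycle-free case with the case in which every non-identity element of $G$ has prime order.

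For the forward direction I would argue constructively. Suppose $x \in G$ has composite order $m$. Choose a proper nontrivial divisor $d$ of $m$, which is possible because $m$ is composite. The cyclic subgroup $\langle x\rangle$ has order $m$, so it contains an element $y$ of order $d$. Now consider the three vertices $e$ (the identity, of order $1$), $y$ (of order $d$), and $x$ (of order $m$). Their orders are pairwise distinct, and $1 \mid d \mid m$, so every pair meets the order-divisor adjacency condition; thus $e$, $y$, $x$ form a triangle in $OD(G)$. By Theorem~\ref{3} this yields $g(OD(G)) = 3$.

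For the reverse direction I would invoke Theorem~\ref{2} directly. If $g(OD(G)) = 3$, then $OD(G)$ contains a cycle, so by Theorem~\ref{2} it is \emph{not} the case that every non-identity element of $G$ has prime order. Therefore some non-identity element has order that is not prime; since such an order is at least $2$, it must be composite, producing the required element and completing the equivalence.

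I do not expect a genuinely hard step once Theorems~\ref{2} and~\ref{3} are in place; the argument is essentially a packaging of those two results. The only points demanding care are (i) recalling that a cyclic group of order $m$ contains an element of each order dividing $m$, so the intermediate vertex $y$ genuinely exists inside $\langle x\rangle$, and (ii) observing that for a non-identity element ``order not prime'' coincides with ``order composite,'' since the identity (order $1$) is the unique element whose order is neither prime nor composite.
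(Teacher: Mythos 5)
Your proof is correct and follows essentially the same route as the paper: a triangle $e$--$y$--$x$--$e$ for the forward direction and the cycle-free/star characterization for the converse. The only cosmetic difference is that you extract the intermediate vertex $y$ from the cyclic subgroup $\langle x\rangle$ rather than via Cauchy's theorem as the paper does, which is if anything slightly more elementary.
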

\begin{proof}
Suppose $G$ has an element $x$ of order $n$, where $n$ is composite. Then $|G|=n\lambda$ for some integer $\lambda\geq1$.
Let $p$ be a prime divisor of $n$. By Cauchy's Theorem $G$ has an element $y$ of order $p$, cf.\cite{B}.
Hence $e-x-y-e$ is a cycle of length $3$ and therefore, $g\big(OD(G)\big)$ is $3$. Conversely, we suppose $g(OD(G))$ is $3$. Then by Theorem \ref{3},
$OD(G)$ can not be a star graph. We know that the vertex corresponding  to identity is adjacent to every other vertex, $i.e$., $e$ is central vertex, therefore
$\exists$ $x,y\in G$ with $o(x)\neq o(y)$ such that $o(x)|o(y)$ or $o(y)|o(x)$. Hence atleast one of $x$ or $y$ is composite.
\end{proof}
\begin{corollary}\label{c}
Let $G$ be a cyclic group. Then $g(OD(G))=3$ iff $G$ has composite order.
\end{corollary}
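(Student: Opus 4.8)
The plan is to reduce the statement entirely to Theorem \ref{1}, which already asserts that $g\big(OD(G)\big) = 3$ if and only if $G$ contains an element of composite order. Once this is invoked, it suffices to establish the purely arithmetic equivalence that, for a cyclic group $G$, the group possesses an element of composite order precisely when $|G|$ is composite. The whole proof is therefore a matter of translating the condition of Theorem \ref{1} into a condition on $|G|$ using the special structure of cyclic groups.

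For the forward implication I would assume $|G|$ is composite. Since $G$ is cyclic, it has a generator $g$ whose order equals $|G|$. As $|G|$ is composite, $g$ is itself an element of composite order, and so Theorem \ref{1} immediately yields $g\big(OD(G)\big) = 3$. This direction is where the cyclic hypothesis does the essential work: it guarantees that the maximal possible order, namely $|G|$, is actually attained by some element.

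For the converse I would assume $g\big(OD(G)\big) = 3$. By Theorem \ref{1}, $G$ has an element $x$ whose order $m$ is composite, in particular $m \geq 4$. By Lagrange's theorem $m$ divides $|G|$, so $|G|$ admits a composite divisor. Since neither $1$ nor any prime admits a composite divisor, $|G|$ must itself be composite. Note that this direction does not even require the cyclic hypothesis; it holds for any finite group.

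I do not anticipate any genuine obstacle here, as the substantive content is supplied by Theorem \ref{1} and the remainder is elementary. The only point deserving a word of care is the converse, where one must observe that a composite number cannot divide a prime (nor divide $1$), so that the mere existence of a composite element order forces $|G|$ to be composite rather than prime. With that remark in place the equivalence, and hence the corollary, follows at once.
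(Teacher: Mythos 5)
Your proof is correct and follows essentially the same route as the paper: both directions reduce to Theorem \ref{1}, using the fact that a cyclic group of composite order has an element of composite order (you take the generator; the paper cites the existence of elements of every divisor order) and, conversely, that an element of composite order forces $|G|$ to be composite. Your write-up is merely more explicit about the converse direction than the paper's one-line proof.
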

\begin{proof}
For each positive divisor $d$ of $|G|$, $G$ has some element of order $d$, cf. \cite[Theorem]{G}.  Apply Theorem \ref{1}.
	\end{proof}

\begin{remark}
It's noteworthy to highlight here that for abelian groups, the girth of the order-divisor graph equals zero
precisely when the group is isomorphic to ${\mathbb{Z}_{p}}^n$ for some prime $p$ and an integer $n \geq 1$. Therefore,
in the scenario of a composite-order abelian groups, the girth of the order-divisor graph is three exactly when the
group contains a pair of non-identity elements with different orders.
\end{remark}

\begin{corollary}\label{d}
	If $G$ is abelian group such that $|G|$ is divisible by two different primes,  then $g\big(OD(G)\big) = 3$.
\end{corollary}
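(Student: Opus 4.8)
The plan is to reduce the claim to Theorem \ref{1} by producing a single element of composite order in $G$; once such an element is in hand, Theorem \ref{1} immediately delivers $g\big(OD(G)\big) = 3$, so no direct cycle-chasing is needed.

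First I would unpack the hypothesis that $|G|$ is divisible by two distinct primes, say $p$ and $q$. By Cauchy's Theorem (already invoked in the proof of Theorem \ref{1}), $G$ contains an element $x$ of order $p$ and an element $y$ of order $q$. These exist regardless of commutativity; the abelian assumption enters only at the next step.

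The key step, and the only place the abelian hypothesis is genuinely used, is to show that the product $xy$ has order $pq$. Since $G$ is abelian we have $(xy)^k = x^k y^k$ for every $k$, so $(xy)^{pq} = x^{pq} y^{pq} = e$ and hence $o(xy)$ divides $pq$; its only candidate values are $1, p, q, pq$. Each of the first three can be ruled out by a short divisibility argument using that $p$ and $q$ are distinct primes: for instance $o(xy) = p$ would force $y^{p} = e$ and thus $q \mid p$, a contradiction, and similarly for the other two cases. This leaves $o(xy) = pq$, which is composite. I expect this to be the main obstacle in the sense that it pinpoints exactly where commutativity is essential: in a nonabelian group the product of a $p$-element and a $q$-element need not have order $pq$, so the corollary really does require the abelian hypothesis here.

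Finally, having exhibited the composite-order element $xy$, I would apply Theorem \ref{1} directly to conclude $g\big(OD(G)\big) = 3$. As an alternative one could instead display the triangle $e - x - xy - e$ explicitly, observing that $1 \mid p \mid pq$ with the three orders pairwise distinct, and then combine this with Theorem \ref{3}; but routing through Theorem \ref{1} is the most economical finish and keeps the proof to a couple of lines.
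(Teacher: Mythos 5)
Your proposal is correct and follows essentially the same route as the paper: invoke Cauchy's Theorem to obtain elements $x$ and $y$ of orders $p$ and $q$, use the abelian hypothesis and $\gcd(p,q)=1$ to conclude $o(xy)=pq$ is composite, and then apply Theorem \ref{1}. Your extra detail ruling out the cases $o(xy)\in\{1,p,q\}$ simply spells out the standard fact the paper cites in one line.
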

\begin{proof}
	Let $G$ be a group with $|G| = n$ and let $n$ be divisible by two different primes $p$ and $q$.
By Cauchy's Theorem $G$ must have elements $x$, $y$ with $o(x)=p$ and $o(y)=q$. Since $G$ is abelian and $gcd(p,q)=1$, therefore $o(xy)=o(x)o(y)=pq$.
	Hence $G$ contains an element $xy$ of composite order. By using Theorem \ref{1}, $g\big(OD(G)\big)$ is $3$.
\end{proof}	

\begin{remark}
	In general, the converse of the preceding theorem is not true. For instance,
	$g\big(OD(\mathbb{Z}_8)\big)=3$ but $|\mathbb{Z}_8|=2^3$, cf. Figure \ref{f1}.
\end{remark}

\begin{center}
\begin{figure}[h]
\includegraphics[width=0.2\textwidth]{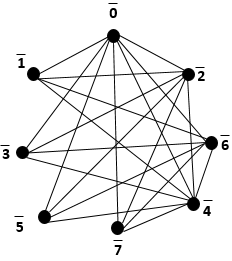}
\caption{$OD(\mathbb{Z}_8)$}
\label{f1}
\end{figure}

	\end{center}

\begin{corollary}\label{e}
	Let $G$ be a group such that $|G| = pq$, where $p$,$q$ are primes. Then $g\big(OD(G)\big) = 3$ iff $G$ is cyclic.
\end{corollary}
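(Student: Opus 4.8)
The plan is to prove both directions by leveraging Theorem \ref{1} together with the very restricted divisor structure of $pq$. The key observation is that since $p$ and $q$ are primes, the only composite divisor of $pq$ is $pq$ itself; every other divisor of $pq$ is either $1$ or a prime. This single fact makes the ``element of composite order'' criterion of Theorem \ref{1} collapse neatly into a statement about generators of $G$.

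For the forward direction, I would assume $g\big(OD(G)\big) = 3$ and invoke Theorem \ref{1} to obtain an element $x \in G$ of composite order. Since $o(x)$ divides $|G| = pq$ and the only composite divisor of $pq$ is $pq$, I conclude $o(x) = pq = |G|$. Hence $x$ generates $G$, and therefore $G$ is cyclic.

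For the reverse direction, I would assume $G$ is cyclic. Because $p, q \geq 2$, the order $|G| = pq$ is composite, so Corollary \ref{c} applies directly and yields $g\big(OD(G)\big) = 3$.

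I do not expect any genuine obstacle here, as the corollary is essentially a specialization of Theorem \ref{1} to the order $pq$. The only point requiring a moment's care is the divisor bookkeeping: the argument should explicitly cover both the case $p \neq q$, where the divisors of $|G|$ are $1, p, q, pq$, and the degenerate case $p = q$, where $|G| = p^2$ with divisors $1, p, p^2$. In both situations the unique composite divisor coincides with $|G|$, which is precisely what forces the composite-order element to be a generator and hence makes $G$ cyclic.
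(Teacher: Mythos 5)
Your proposal is correct and follows essentially the same route as the paper: both directions reduce to Theorem \ref{1}, with the forward direction using the fact that the only composite divisor of $pq$ is $pq$ itself, forcing the composite-order element to be a generator. Your explicit remark about the degenerate case $p=q$ (where $|G|=p^2$) is a small point of extra care that the paper's proof handles implicitly but does not mention.
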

\begin{proof}
	Suppose $g\big(OD(G)\big)=3$. Then by Theorem \ref{1}, $G$ has some element $x$ of composite order. Since $o(x)\mid pq$, So $o(x)=pq$ and thus $G=<x>$.
 Conversely, Suppose $G$ is cyclic. Then the generator of $G$ has order $pq$ and hence by Theorem \ref{1}, $g(OD(G))=3.$
	\end{proof}

\begin{corollary}\label{f}
		The order-divisor graph $OD(\mathcal{D}_n)$ of the dihedral group
 $\mathcal{D}_n$ has girth $3$ iff $n$ is composite.

\end{corollary}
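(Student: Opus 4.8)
The plan is to reduce everything to Theorem \ref{1}, which already tells us that $g\big(OD(\mathcal{D}_n)\big) = 3$ precisely when $\mathcal{D}_n$ contains an element of composite order. Thus the whole corollary comes down to proving the purely group-theoretic equivalence: \emph{$\mathcal{D}_n$ has an element of composite order if and only if $n$ is composite} (recalling that for $n \geq 3$, "not composite" means $n$ is prime). First I would recall the standard description of the $2n$ elements of $\mathcal{D}_n$, splitting them into the $n$ rotations $e, \alpha, \alpha^2, \dots, \alpha^{n-1}$ forming the cyclic subgroup $\langle \alpha \rangle \cong \mathbb{Z}_n$, and the $n$ reflections $\beta, \alpha\beta, \dots, \alpha^{n-1}\beta$.

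Next I would pin down the possible orders. Each rotation $\alpha^k$ has order $n/\gcd(n,k)$, which is a divisor of $n$, and every divisor of $n$ is realized by some rotation. Each reflection satisfies $(\alpha^k\beta)^2 = e$ and is not the identity, so every reflection has order exactly $2$. The key observation is therefore that the only source of a composite order in $\mathcal{D}_n$ is the rotation subgroup, since the reflections only ever contribute the prime order $2$.

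With this in hand both directions follow quickly. If $n$ is composite, then $\alpha$ itself has order $n$, a composite number, so by Theorem \ref{1} we get $g\big(OD(\mathcal{D}_n)\big) = 3$. Conversely, if $n$ is prime, then the divisors of $n$ are only $1$ and $n$, so every rotation has order $1$ or the prime $n$, while every reflection has the prime order $2$; hence no element of $\mathcal{D}_n$ has composite order, and Theorem \ref{1} forces $g\big(OD(\mathcal{D}_n)\big) \neq 3$ (indeed $OD(\mathcal{D}_n)$ is then the star $S_{2n}$ with girth $0$).

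The argument is essentially routine once the order computation is in place, so there is no serious obstacle; the only point that needs care is the bookkeeping of orders, namely verifying that composite orders genuinely cannot arise from reflections and that a composite $n$ is itself a composite divisor realized in $\langle \alpha \rangle$. Alternatively, one could bypass part of this by noting $\langle \alpha \rangle \cong \mathbb{Z}_n$ and invoking Corollary \ref{c} for the rotation subgroup, though it is cleaner here to apply Theorem \ref{1} directly to $\mathcal{D}_n$ after the order analysis.
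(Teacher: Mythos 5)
Your proof is correct and follows exactly the route the paper intends: the corollary is stated without proof immediately after Theorem \ref{1}, and the intended derivation is precisely your reduction to the existence of a composite-order element, settled by noting that reflections have order $2$ and rotations realize every divisor of $n$ (in particular $n$ itself). Your order bookkeeping for $\mathcal{D}_n$ is accurate, and the observation that the prime case yields the star $S_{2n}$ agrees with the paper's earlier Proposition.
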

Next we determine the girth of order-divisor graph of external direct product $E \oplus F$ of groups $E$ and $F$, cf. \cite[Chapter 8]{G}.

\begin{theorem}\label{g}
	Let $E$ and $F$ be groups. The girth of $OD(E \oplus F)$ is $3$ iff at least one of the following is true:
	\begin{enumerate}
		\item[(a)] $E$ contains an element of composite order.
		\item[(b)] $F$ contains an element of composite order.
		\item [(c)] There are elements $x \in E$ and $y \in F$ whose orders are distinct prime numbers.
	\end{enumerate}
\end{theorem}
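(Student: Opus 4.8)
The plan is to reduce the whole statement to Theorem \ref{1} together with the elementary fact that, for an element $(x,y)$ of the external direct product $E\oplus F$, one has $o\big((x,y)\big)=\operatorname{lcm}\big(o(x),o(y)\big)$. Indeed, by Theorem \ref{1} the equality $g\big(OD(E\oplus F)\big)=3$ holds iff $E\oplus F$ contains an element of composite order. Hence the task becomes purely arithmetic: to show that $E\oplus F$ has an element of composite order iff at least one of (a), (b), (c) is satisfied.

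For the backward implication I would simply exhibit witnesses. If (a) holds, pick $x\in E$ of composite order; then $(x,e_F)$ has order $o(x)$, which is composite. Symmetrically (b) is handled by $(e_E,y)$. If (c) holds with $o(x)=p$ and $o(y)=q$ for distinct primes $p,q$, then $o\big((x,y)\big)=\operatorname{lcm}(p,q)=pq$, again composite. In each case Theorem \ref{1} yields $g\big(OD(E\oplus F)\big)=3$.

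For the forward implication, suppose $(x,y)\in E\oplus F$ has composite order and set $a=o(x)$, $b=o(y)$, so that $\operatorname{lcm}(a,b)$ is composite. If $a$ is composite we are in case (a), and if $b$ is composite we are in case (b). Otherwise each of $a,b$ lies in $\{1\}\cup\{\text{primes}\}$, and I would argue by elimination: if one of them equals $1$, then $\operatorname{lcm}(a,b)$ equals the other, which is $1$ or prime and hence not composite; and if $a=b$ is a single prime, then $\operatorname{lcm}(a,b)=a$ is prime. Since $\operatorname{lcm}(a,b)$ is composite, both $a$ and $b$ must be primes that are moreover distinct, which is exactly case (c). The only point demanding care is the completeness of this case split, ensuring that every way of producing a composite $\operatorname{lcm}$ from orders that are individually non-composite is accounted for; this is the main (though modest) obstacle, and it is dispatched entirely by the order-in-a-direct-product formula.
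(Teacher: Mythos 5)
Your proposal is correct and follows essentially the same route as the paper, which simply invokes the formula $o\big((a,b)\big)=\operatorname{lcm}\big(o(a),o(b)\big)$ and then applies Theorem \ref{1}; you have merely written out the case analysis that the paper leaves implicit. The case split in your forward implication is complete and the witnesses in the backward implication are the natural ones.
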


\begin{proof}
The order of an element $(a,b)$ in $E \oplus F$ is $lcm\big(o(a),o(b)\big)$, cf. \cite[Theorem 8.1]{G}. Then apply Theorem \ref{1}.
\end{proof}
\begin{corollary}\label{h}
Let $E$ and $F$ be cyclic groups. Then $g\big(OD(E\oplus F)\big) = 3$ iff at least one of the following is true:
\begin{enumerate}
\item[(a)] $|E|$ is composite
\item[(b)] $|F|$ is composite.
\item [(c)]$|E|$ and $|F|$ are distinct primes.
\end{enumerate}
\end{corollary}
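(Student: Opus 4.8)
The plan is to derive this directly from Theorem \ref{g} by translating its three conditions, which are phrased in terms of element orders, into conditions on $|E|$ and $|F|$. The only tool needed is the defining feature of a cyclic group $C$ already invoked in Corollary \ref{c} (cf. \cite{G}): for each divisor $d$ of $|C|$ there is an element of order $d$, and conversely every element order divides $|C|$. Consequently the set of element orders of $C$ is exactly the set of divisors of $|C|$, and the primes occurring as element orders are exactly the prime divisors of $|C|$.

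First I would handle conditions (a) and (b) of Theorem \ref{g}. For cyclic $E$, an element of composite order exists iff $|E|$ has a composite divisor; since the largest divisor of $|E|$ is $|E|$ itself, this happens precisely when $|E|$ is composite. Indeed, if $|E|$ is $1$ or prime then every divisor is $1$ or prime and no composite-order element exists, whereas if $|E|$ is composite then its generator already has composite order. Thus condition (a) of Theorem \ref{g} is equivalent to $|E|$ being composite, which is condition (a) of the corollary, and symmetrically for (b).

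Next I would reconcile the third conditions, which is the only delicate point. By the cyclic fact, condition (c) of Theorem \ref{g} is equivalent to the existence of a prime $p \mid |E|$ and a prime $q \mid |F|$ with $p \neq q$. I would then argue that the disjunction of the three conditions of Theorem \ref{g} equals the disjunction of the three conditions of the corollary. One direction is immediate, since corollary-(a) and corollary-(b) are literally theorem-(a) and theorem-(b), and corollary-(c), that $|E|$ and $|F|$ are distinct primes, plainly implies theorem-(c). For the converse I would suppose the corollary's disjunction fails, so $|E|$ is not composite, $|F|$ is not composite, and $|E|, |F|$ are not distinct primes, and show theorem-(c) also fails. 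Since neither order is composite, each of $|E|, |F|$ equals $1$ or a prime. If either equals $1$, the corresponding group has no element of prime order and theorem-(c) cannot hold; if both are prime they must coincide (otherwise corollary-(c) would hold), so the only prime order arising in either group is that common prime, and again no pair of distinct prime orders exists.

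The main obstacle is exactly this last reconciliation. Condition (c) of Theorem \ref{g} is stated in terms of prime divisors and is a priori weaker than the corollary's condition (c), which demands that $|E|$ and $|F|$ be primes outright. The resolution is the observation that whenever a prime \emph{properly} divides a group order, that order is already composite, so conditions (a) and (b) of the corollary absorb every case except the one in which both orders are themselves primes; the point requiring care is the bookkeeping of the trivial-group cases $|E| = 1$ or $|F| = 1$, where no element of prime order exists at all and theorem-(c) is vacuously unsatisfiable.
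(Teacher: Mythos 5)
Your proposal is correct and follows the route the paper clearly intends (the corollary is stated immediately after Theorem \ref{g} with no written proof): specialize the three conditions of Theorem \ref{g} to cyclic groups using the fact that the element orders of a cyclic group are exactly the divisors of its order, and then check that the two disjunctions coincide. Your careful handling of the trivial-group case and of the absorption of theorem-condition (c) into corollary-conditions (a) and (b) supplies exactly the bookkeeping the paper leaves implicit.
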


\section{The Degree and Size within OD(G)}

\noindent This section is devoted to examine the degrees of vertices within  $OD(G)$ and the size of $OD(G)$. For the convenience and ease of the proofs, we define the following terminologies.
\begin{definition}
In each edge $xy$ in $OD(G)$, we call the vertex $x$ lower vertex and $y$ the upper vertex if $|x|$ divides $|y|$.
\end{definition}
\begin{definition}
	For each vertex $x$ in $OD(G)$ the lower degree of $x$, denoted by $Ldeg(x)$, is the number of lower vertices adjacent to $x$, similarly the upper degree of $x$, denoted by $Udeg(x)$, is defined to be the number of upper vertices adjacent to $x$.
\end{definition}

\subsection{The Degree and Size within $OD(\mathbb{Z}_n)$}$\,$
We describe here the degree of vertices and the size for the order-divisor graph of the finite cyclic groups.
We initiate our discussion with the aid of an example.

\begin{example}
Let us examine the order-divisor graph $OD(\mathbb{Z}_6)$ shown in Figure \ref{f2}.  We determine the degree of each vertex within $OD(\mathbb{Z}_6)$,
as shown in Table \ref{t1}.
\begin{center}
\begin{figure}[h]
	\includegraphics[width=0.2\textwidth]{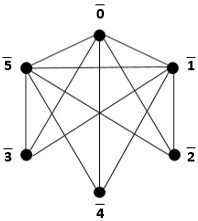}
\caption{$OD(\mathbb{Z}_6)$}
\label{f2}
\end{figure}
	\end{center}
\begin{center}
\begin{table}[!h]
	\begin{tabular}{|c|c|c|}
		\hline
		vertex & order & degree \\
		\hline
		$\overline{0}$ & $1$ & $5$ \\
		\hline
		$\overline{1}$ & $6$ & $4$\\
		\hline
		$\overline{2}$ & $3$ & $3$\\
		\hline
		$\overline{3}$ & $2$ & $3$\\
		\hline
		$\overline{4}$&$3$&$3$\\
		\hline
		$\overline{5}$&$6$&$4$\\
		\hline
	\end{tabular}
\caption{Degree $\&$ Order for $OD(\mathbb{Z}_6)$}
\label{t1}
\end{table}
\end{center}
\end{example}

\begin{remark}
It is quite difficult to compute directly the degree of each vertex in $OD(\mathbb{Z}_n)$, as in above table, for the large
values of $n$. Hence there is a need to find some explicit formula to determine the degree without actually drawing $OD(\mathbb{Z}_n)$. In this section, we introduce a general formula for calculating the degree of each vertex  in $OD(\mathbb{Z}_n)$. Using this formula, we also derive some intriguing insights related to the degrees of vertices  within $OD(\mathbb{Z}_n)$.
\end{remark}

\begin{theorem}\label{4}
	Let $x \in \mathbb{Z}_n$ has order $m$. Then\\
	\begin{equation*}
	 deg(x) = m - 2\phi(m) +  \sum\limits_{\lambda \rvert \frac{n}{m}}\phi(\lambda m),
	\end{equation*}
where $\phi$ represents the Euler's totient function.
\end{theorem}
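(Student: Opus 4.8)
The plan is to exploit the two fundamental facts about finite cyclic groups: the order of every element of $\mathbb{Z}_n$ is a divisor of $n$, and for each divisor $d$ of $n$ there are exactly $\phi(d)$ elements of order $d$. Combined with Gauss's identity $\sum_{d \mid m}\phi(d) = m$, these let me count the neighbors of $x$ by sorting them according to their orders. Concretely, I would write $\deg(x) = Ldeg(x) + Udeg(x)$ using the lower/upper vertex terminology just introduced, which is legitimate because any neighbor $y$ of $x$ satisfies either $o(y) \mid m$ with $o(y) \neq m$ (a lower vertex) or $m \mid o(y)$ with $o(y) \neq m$ (an upper vertex), and these two cases are mutually exclusive and exhaustive.

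First I would compute the lower degree. A lower neighbor of $x$ is an element whose order is a proper divisor of $m$; grouping these elements by their common order $d$, each $d \mid m$ with $d \neq m$ contributes exactly $\phi(d)$ vertices, so that
\begin{equation*}
Ldeg(x) = \sum_{\substack{d \mid m \\ d \neq m}} \phi(d) = \left(\sum_{d \mid m}\phi(d)\right) - \phi(m) = m - \phi(m),
\end{equation*}
where the last equality is Gauss's formula.

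Next I would compute the upper degree. An upper neighbor of $x$ is an element whose order is a proper multiple of $m$ that still divides $n$; writing such an order as $\lambda m$, the admissible values of $\lambda$ are precisely the divisors of $\frac{n}{m}$ with $\lambda \neq 1$, each contributing $\phi(\lambda m)$ vertices. Hence
\begin{equation*}
Udeg(x) = \sum_{\substack{\lambda \mid \frac{n}{m} \\ \lambda \neq 1}} \phi(\lambda m) = \left(\sum_{\lambda \mid \frac{n}{m}}\phi(\lambda m)\right) - \phi(m),
\end{equation*}
since the term $\lambda = 1$ contributes $\phi(m)$. Adding $Ldeg(x)$ and $Udeg(x)$ then gives $\deg(x) = m - 2\phi(m) + \sum_{\lambda \mid \frac{n}{m}}\phi(\lambda m)$, which is the claimed formula.

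The argument is essentially bookkeeping, so I do not anticipate a deep obstacle; the only points requiring care are verifying that every element order that is a multiple of $m$ is genuinely of the form $\lambda m$ with $\lambda \mid \frac{n}{m}$ (that is, that $o(y) \mid n$ together with $m \mid o(y)$ forces $\lambda \mid \frac{n}{m}$), and making sure the $\phi(m)$ elements of order exactly $m$ — including $x$ itself — are excluded from both tallies. The two subtractions of $\phi(m)$ are precisely what remove these from the lower and upper counts, respectively, and checking that no vertex is double-counted or omitted is the main thing to confirm.
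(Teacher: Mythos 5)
Your proposal is correct and follows essentially the same route as the paper: decompose $\deg(x)=Ldeg(x)+Udeg(x)$, compute $Ldeg(x)=\sum_{d\mid m}\phi(d)-\phi(m)=m-\phi(m)$ via Gauss's identity, and compute $Udeg(x)=\sum_{\lambda\mid\frac{n}{m}}\phi(\lambda m)-\phi(m)$ by reindexing the multiples of $m$ dividing $n$. Your version is slightly more careful than the paper's in spelling out why the two tallies are exhaustive, disjoint, and each exclude the $\phi(m)$ elements of order $m$, but the argument is the same.
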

\begin{proof}
We know that for each divisor $d$ of $n$, exactly $\phi(d)$ elements of order $d$ are contained in $\mathbb{Z}_n$, cf. \cite[Theorem 4.4]{G}. Hence
\begin{eqnarray*}
	Ldeg(x) &=& \sum\limits_{d \rvert m}\phi(d) - \phi(m)\\
	&=& m - \phi(m)
\end{eqnarray*}	
Also,
\begin{eqnarray*}
	Udeg(x) &=& \sum\limits_{\lambda m \rvert n}\phi(\lambda m) - \phi(m)\\
	&=& \sum\limits_{\lambda \rvert \frac{n}{m}}\phi(\lambda m) - \phi(m)
\end{eqnarray*}
Hence,
\begin{eqnarray*}
	deg(x) &=& Ldeg(x) + Udeg(x)\\
	&=& m - \phi(m) + \sum\limits_{\lambda \rvert \frac{n}{m}}\phi(\lambda m) - \phi(m)\\
deg(x)	&=& m - 2\phi(m) + \sum\limits_{\lambda \rvert \frac{n}{m}}\phi(\lambda m).
\end{eqnarray*}
\end{proof}
\begin{corollary}\label{6}
	Let $x \in \mathbb{Z}_n$ has order $m$. If $n = p^k$ and $m = p^i$, then\\
		\[deg(x)=
	\begin{cases}
		p^k + p^{i-1} - p^i, & \text{if $i\geq1$} \\
		p^k-1, & \text{if $i = 0$}
	\end{cases}\]
\end{corollary}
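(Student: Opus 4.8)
The plan is to specialize the general degree formula from Theorem~\ref{4} to the case $n = p^k$ and $m = p^i$. Since Theorem~\ref{4} gives
\[
deg(x) = m - 2\phi(m) + \sum_{\lambda \mid \frac{n}{m}} \phi(\lambda m),
\]
the entire task reduces to evaluating each of the three pieces under the substitution $m = p^i$, $n/m = p^{k-i}$, and then splitting into the two cases $i \geq 1$ and $i = 0$ dictated by the behavior of $\phi$ at prime powers.

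First I would record the two elementary facts about the totient at prime powers: $\phi(p^i) = p^i - p^{i-1}$ for $i \geq 1$ and $\phi(p^0) = \phi(1) = 1$. The substitution $m = p^i$ gives $m - 2\phi(m) = p^i - 2(p^i - p^{i-1}) = 2p^{i-1} - p^i$ in the case $i \geq 1$. Next I would handle the sum: the divisors $\lambda$ of $n/m = p^{k-i}$ are exactly $\lambda = p^j$ for $j = 0, 1, \dots, k-i$, so $\lambda m = p^{i+j}$ ranges over $p^i, p^{i+1}, \dots, p^k$. Thus the sum telescopes:
\[
\sum_{\lambda \mid p^{k-i}} \phi(\lambda p^i) = \sum_{j=0}^{k-i} \phi(p^{i+j}) = \phi(p^i) + \sum_{t=i+1}^{k} \phi(p^t).
\]
For $i \geq 1$ every term is a prime-power totient with positive exponent, so $\sum_{t=i+1}^{k}\phi(p^t) = \sum_{t=i+1}^{k}(p^t - p^{t-1})$ telescopes to $p^k - p^i$, and adding $\phi(p^i) = p^i - p^{i-1}$ gives a total of $p^k - p^{i-1}$.

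Combining the pieces for $i \geq 1$ yields $deg(x) = (2p^{i-1} - p^i) + (p^k - p^{i-1}) = p^k + p^{i-1} - p^i$, which is the claimed first case. For the case $i = 0$ the element $x$ has order $m = 1$, so $x$ is the identity; here one can argue directly that the identity is adjacent to all $p^k - 1$ other vertices, giving $deg(x) = p^k - 1$ immediately, or equivalently observe that $m - 2\phi(m) = 1 - 2 = -1$ and the sum over all divisors $\lambda \mid p^k$ of $\phi(\lambda)$ equals $\sum_{t=0}^{k}\phi(p^t) = p^k$, so $deg(x) = -1 + p^k = p^k - 1$. I expect no genuine obstacle in this proof, as it is a direct substitution; the only point demanding care is keeping the two cases separate, since the formula $\phi(p^i) = p^i - p^{i-1}$ fails at $i = 0$, which is precisely why the identity vertex is split off as its own case.
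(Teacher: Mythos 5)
Your proposal is correct and follows essentially the same route as the paper: substitute $m=p^i$, $n=p^k$ into Theorem~\ref{4}, evaluate the totient sum over the divisors $p^j$ of $p^{k-i}$ by telescoping, and treat $i=0$ separately because $\phi(1)=1$ breaks the formula $\phi(p^i)=p^i-p^{i-1}$. The computations match the paper's line for line.
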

\begin{proof}
	From Theorem  \ref{4},\\
	\begin{eqnarray*}
		deg(x)	&=& m - 2\phi(m) + \sum\limits_{\lambda \rvert \frac{n}{m}}\phi(\lambda m)\\
		&=& p^i - 2\phi(p^i) + \sum\limits_{\lambda \rvert \frac{p^k}{p^i}}\phi(p^i\lambda)
	\end{eqnarray*}
When $i\geq1$,
		\begin{eqnarray*}
	deg(x)	&=& 2p^{i-1} - p^i + \sum\limits_{\lambda \rvert{p^{k-i}}}\phi(p^i\lambda)\\
		&=& 2p^{i-1} - p^i + \phi(p^i) + \phi(p^{i+1}) + \phi(p^{i+2}) +...+ \phi(p^k)\\
			&=& 2p^{i-1} - p^i + (p^i - p^{i-1}) + (p^{i+1} - p^i) + (p^{i+2}-p^i+1) + \cdots + (p^k-p^{k-1})\\
	&=& p^k + p^{i-1} - p^i.
\end{eqnarray*}
When $i = 0$,
\begin{eqnarray*}
	deg(x)&=& - 1 + 1 + \phi(p) + \phi(p^2) + \cdots + \phi(p^k)\\
	&=& p^k -1.
\end{eqnarray*}
\end{proof}

\noindent Next, we explore intriguing findings regarding the sum of degrees in the order-divisor graph and the sum
of the orders of each element in the associated group.\\

\begin{corollary}\label{7}
	For every prime $p$ and each positive integer $k$,
	$\sum\limits_{x\in \mathbb{Z}_{p^k}}	deg(x)$ $=$ $\frac{2 p^{2k}-2}{p+1}$.

\end{corollary}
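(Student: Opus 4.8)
The plan is to organize the sum $\sum_{x \in \mathbb{Z}_{p^k}} \deg(x)$ according to the order of each vertex. Since every element of $\mathbb{Z}_{p^k}$ has order $p^i$ for some $0 \le i \le k$, and there are exactly $\phi(p^i)$ elements of each such order (cf. \cite[Theorem 4.4]{G}), I would collapse the sum over the $p^k$ vertices into a single index sum over $i$, with each term weighted by the count $\phi(p^i)$ and the common degree of an order-$p^i$ vertex supplied by Corollary \ref{6}. The identity (the case $i=0$) must be handled separately, since its degree $p^k-1$ comes from a different branch of Corollary \ref{6}; it contributes a single term $1\cdot(p^k-1)$.

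For $i \ge 1$, Corollary \ref{6} gives degree $p^k + p^{i-1} - p^i$. The key simplification I would exploit is that $p^i - p^{i-1} = \phi(p^i)$, so the degree can be rewritten compactly as $p^k - \phi(p^i)$. Each summand then takes the clean form $\phi(p^i)\bigl(p^k - \phi(p^i)\bigr) = p^k\phi(p^i) - \phi(p^i)^2$, which splits the whole computation into two standard sums, namely $\sum_{i=1}^k \phi(p^i)$ and $\sum_{i=1}^k \phi(p^i)^2$.

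The first sum telescopes to $p^k - 1$. The second is essentially geometric: writing $\phi(p^i)^2 = (p-1)^2 p^{2i-2}$, it evaluates to $(p-1)^2 \cdot \frac{p^{2k}-1}{p^2-1} = \frac{(p-1)(p^{2k}-1)}{p+1}$. Assembling the pieces, the total becomes $(p^k-1) + p^k(p^k-1) - \frac{(p-1)(p^{2k}-1)}{p+1} = (p^{2k}-1)\bigl(1 - \frac{p-1}{p+1}\bigr)$, which collapses to $\frac{2(p^{2k}-1)}{p+1}$, the asserted value. No step here is deep; the only care required is to peel off the identity term correctly and to simplify the geometric sum without arithmetic slips, so I expect the bookkeeping in recombining the two sums to be the main (and rather mild) obstacle.
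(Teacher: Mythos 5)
Your proposal is correct and follows essentially the same route as the paper: group the vertices by order, invoke Corollary \ref{6}, rewrite the degree of an order-$p^i$ element as $p^k-\phi(p^i)$, and reduce the computation to the telescoping sum $\sum\phi(p^i)=p^k-1$ plus the geometric sum $\sum\phi(p^i)^2=(p-1)^2\frac{p^{2k}-1}{p^2-1}$. The paper's displayed computation is exactly this argument written out term by term, so there is nothing to add.
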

\begin{proof}
We know that, for each divisor $d$ of $n$, $\mathbb{Z}_n$ contains exactly $\phi(d)$ elements that have order $d$. Therefore, by Corollary \ref{6},
\begin{eqnarray*}
	\sum\limits_{x\in \mathbb{Z}_{p^k}} deg(x) &=& (p^k-1) + (p^k+1-p) \phi(p) + (p^k+p-p^2) \phi(p^2) + \cdots + (p^k+p^{k-1}-p^k) \phi(p^k)\\
	&=& (p^k-1) + (p^k+1-p) (p-1) + (p^k+p-p^2) (p^2-p) + \cdots + (p^k+p^{k-1}-p^k) (p^k- p^{k-1})\\
	&=& (p^k-1) + (p^k-(p-1)) (p-1) + (p^k-(p^2-p)) (p^2-p) + \cdots + (p^k-(p^{k}-p^{k-1})) (p^k- p^{k-1})\\
	&=& (p^k-1)+p^k(p^k-1)-\left((p-1)^2+p^2(p-1)^2+p^4(p-1)^2+\cdots +p^{2(k-1)}(p-1)^2\right)\\
	&=& p^{2k}-1 -(p-1)^2\left(\frac{p^{2k}-1}{p^2-1}\right)\\
\sum\limits_{x\in \mathbb{Z}_{p^k}} deg(x)	&=& \frac{2p^{2k}-2}{p+1}.
\end{eqnarray*}
\end{proof}
\normalfont
\begin{corollary}\label{9}
		$\sum\limits_{x\in \mathbb{Z}_{2^k}}	deg(x)$ $=$ $\frac{ 2^{2k+1}-2}{3}$ for each positive integer $k$.
\end{corollary}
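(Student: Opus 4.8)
The plan is to recognize that this statement is nothing more than the special case $p = 2$ of the preceding Corollary~\ref{7}. That corollary establishes, for an arbitrary prime $p$ and positive integer $k$, the closed form
\begin{equation*}
\sum_{x\in \mathbb{Z}_{p^k}} \deg(x) = \frac{2p^{2k}-2}{p+1}.
\end{equation*}
Since $2$ is prime, I would simply substitute $p = 2$ into the right-hand side and simplify: the numerator becomes $2\cdot 2^{2k} - 2 = 2^{2k+1} - 2$ and the denominator becomes $2 + 1 = 3$, yielding exactly $\frac{2^{2k+1}-2}{3}$. No new combinatorial or number-theoretic input is required.

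Because the result follows by direct specialization, there is no genuine obstacle to overcome; the only point worth verifying carefully is the arithmetic of the substitution, namely that $2\cdot 2^{2k} = 2^{2k+1}$. I would therefore present the proof as a single displayed computation invoking Corollary~\ref{7} and reading off the value at $p=2$. One could alternatively re-derive the sum directly from Corollary~\ref{6} specialized to $p=2$ (summing $(2^k + 2^{i-1} - 2^i)\phi(2^i)$ over $0 \le i \le k$ together with the multiplicities $\phi(2^i)$), but this merely reproduces the calculation already carried out in the proof of Corollary~\ref{7}, so the substitution route is the cleanest and is the approach I would take.
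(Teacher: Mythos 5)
Your proposal is correct and matches the paper's intent exactly: the paper states Corollary~\ref{9} without proof as the immediate specialization $p=2$ of Corollary~\ref{7}, and your substitution $2\cdot 2^{2k}=2^{2k+1}$, $p+1=3$ is all that is needed.
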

\begin{corollary}\label{10}
	$\sum\limits_{x\in\mathbb{Z}_{p^k}} o(x) = \frac{p^{2k+1}+1}{p+1}$ for each positive integer $k$.
\end{corollary}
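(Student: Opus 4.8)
The plan is to compute the sum directly by organizing the elements of $\mathbb{Z}_{p^k}$ according to their orders. As used already in the proof of Theorem \ref{4}, for each divisor $d$ of $p^k$ the group $\mathbb{Z}_{p^k}$ contains exactly $\phi(d)$ elements of order $d$. Since the divisors of $p^k$ are precisely $1, p, p^2, \ldots, p^k$, I would regroup the sum over all group elements as a sum over these divisors:
\begin{equation*}
\sum_{x \in \mathbb{Z}_{p^k}} o(x) = \sum_{i=0}^{k} p^i \, \phi(p^i).
\end{equation*}

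First I would substitute $\phi(1) = 1$ and $\phi(p^i) = p^i - p^{i-1}$ for $i \geq 1$, so that the $i$-th summand becomes $p^{2i} - p^{2i-1}$ for $i \geq 1$, while the $i = 0$ term contributes just $1$. This splits the remaining sum into two geometric series, $\sum_{i=1}^{k} p^{2i}$ and $\sum_{i=1}^{k} p^{2i-1}$, each with common ratio $p^2$. Evaluating these in closed form and isolating the common factor gives $(p^2 - p)\frac{p^{2k}-1}{p^2-1}$ for the combined sum over $i \geq 1$.

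Next I would simplify by factoring $p^2 - p = p(p-1)$ and $p^2 - 1 = (p-1)(p+1)$, cancelling the factor $(p-1)$ to obtain $\frac{p(p^{2k}-1)}{p+1}$. Adding back the isolated $1$ from the $i = 0$ term and combining over the common denominator $p+1$ yields $\frac{(p+1) + p(p^{2k}-1)}{p+1} = \frac{p^{2k+1}+1}{p+1}$, which is the claimed value.

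There is no genuine conceptual obstacle here, since the statement reduces to summing two geometric progressions. The only thing to watch is the bookkeeping in the simplification: one must keep the $i = 0$ term separate (it is not captured by the formula $\phi(p^i) = p^i - p^{i-1}$) and be careful with the cancellation of $(p-1)$, so that the final numerator comes out exactly as $p^{2k+1}+1$ rather than differing by a constant. As a sanity check, setting $p = 2$ recovers the value $\frac{2^{2k+1}+1}{3}$ asserted in Corollary \ref{11}.
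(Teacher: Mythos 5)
Your computation is correct: grouping the elements of $\mathbb{Z}_{p^k}$ by order, using that there are exactly $\phi(p^i)$ elements of order $p^i$, and summing the resulting geometric progressions does give $1+\frac{p(p^{2k}-1)}{p+1}=\frac{p^{2k+1}+1}{p+1}$. The paper, however, does not prove this identity at all --- its entire ``proof'' is a citation to an external source (Lemma 2.1 of Mansuro\u{g}lu's paper on sums of element orders in finite $p$-groups). So your argument is genuinely different in the sense that it is self-contained: you supply the elementary verification that the paper outsources. What your approach buys is independence from the reference and transparency (the same counting fact $\sum_{d\mid n}$, ``$\phi(d)$ elements of order $d$,'' is already used in Theorem \ref{4}, so no new machinery is needed); what the paper's approach buys is brevity and an implicit pointer to the more general context of order-sums in $p$-groups. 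Your sanity check against Corollary \ref{11} at $p=2$ is a nice touch and consistent with the paper.
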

\begin{proof}
Refer to \cite[Lemma 2.1]{MN}.
\end{proof}
\begin{corollary}\label{11}
		$\sum\limits_{x\in\mathbb{Z}_{2^k}} o(x)=\frac{2^{2k+1}+1}{3}$ for each positive integer $k$.
\end{corollary}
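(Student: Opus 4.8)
The plan is straightforward: this statement is precisely the $p = 2$ specialization of the preceding Corollary~\ref{10}, so I would prove it by direct substitution rather than from scratch. Corollary~\ref{10} asserts that for every prime $p$ and positive integer $k$ one has $\sum_{x\in\mathbb{Z}_{p^k}} o(x) = \frac{p^{2k+1}+1}{p+1}$. Setting $p = 2$ in this formula immediately yields $\frac{2^{2k+1}+1}{2+1} = \frac{2^{2k+1}+1}{3}$, which is exactly the claimed value. Since Corollary~\ref{10} is already established (via \cite[Lemma 2.1]{MN}), there is essentially nothing further to verify; the only arithmetic involved is evaluating $p + 1 = 3$ at $p = 2$.

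For completeness, and to make the corollary self-contained, I would also note that the result admits a short direct verification by summing over orders. Recalling that $\mathbb{Z}_{2^k}$ contains exactly $\phi(2^i)$ elements of order $2^i$ for each $0 \le i \le k$, one computes
\begin{equation*}
\sum_{x\in\mathbb{Z}_{2^k}} o(x) = 1 + \sum_{i=1}^{k} 2^i\,\phi(2^i) = 1 + \sum_{i=1}^{k} 2^i\cdot 2^{i-1} = 1 + \sum_{i=1}^{k} 2^{2i-1}.
\end{equation*}
The remaining sum is geometric with ratio $4$, giving $1 + \tfrac{2(4^k-1)}{3} = \tfrac{2^{2k+1}+1}{3}$ after simplification. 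Either route reaches the same closed form.

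There is no genuine obstacle in this corollary: the substitution approach is a one-line deduction from Corollary~\ref{10}, and the direct approach requires only the standard count of elements of each order in a cyclic group together with the summation of a finite geometric series. I expect the entire argument to consist of a single displayed equation invoking Corollary~\ref{10} with $p=2$.
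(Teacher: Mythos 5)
Your proposal is correct and matches the paper's (implicit) approach: the paper states Corollary~\ref{11} without proof, clearly intending it as the immediate $p=2$ specialization of Corollary~\ref{10}, which is exactly your main argument. Your supplementary direct computation via the geometric series is also correct, though not needed.
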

\begin{corollary}\label{12}
	$1+\sum\limits_{x\in \mathbb{Z}_{2^k}}deg(x)$ $=$ $\sum\limits_{x\in \mathbb{Z}_{2^k}} o(x)$ for each positive integer $k$.
\end{corollary}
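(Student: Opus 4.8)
The plan is to derive this identity directly from the two closed-form evaluations already established, namely Corollary~\ref{9} for the total degree and Corollary~\ref{11} for the total order, both specialized to the prime $p = 2$. There is no need to revisit the graph $OD(\mathbb{Z}_{2^k})$ itself or to reconstruct any combinatorial argument; everything follows by combining these formulas and checking the resulting arithmetic.

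First I would recall from Corollary~\ref{9} that
\[
\sum_{x \in \mathbb{Z}_{2^k}} \deg(x) = \frac{2^{2k+1} - 2}{3},
\]
so that adding $1$ and clearing the denominator gives
\[
1 + \sum_{x \in \mathbb{Z}_{2^k}} \deg(x) = \frac{3 + 2^{2k+1} - 2}{3} = \frac{2^{2k+1} + 1}{3}.
\]
Next I would invoke Corollary~\ref{11}, which states that $\sum_{x \in \mathbb{Z}_{2^k}} o(x) = \frac{2^{2k+1} + 1}{3}$, and simply observe that the two right-hand sides coincide. This closes the argument for every positive integer $k$.

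The computation itself presents no real obstacle; its entire content is the observation that $3 - 2 = 1$ makes the numerators agree. The only conceptual point worth flagging is \emph{why} the prime $2$ is distinguished here, and I would address it in a short remark. Comparing the general formulas of Corollary~\ref{7} and Corollary~\ref{10}, the putative identity $1 + \sum_x \deg(x) = \sum_x o(x)$ over $\mathbb{Z}_{p^k}$ reduces to $p^{2k}(p - 2) = p - 2$: for $p = 2$ both sides vanish for every $k$, whereas for odd $p$ this would force $p^{2k} = 1$, which is impossible for $k \geq 1$. Thus the equality is genuinely special to $p = 2$, and I would present the result as a one-line corollary-style deduction, optionally accompanied by this observation to explain the role of the prime.
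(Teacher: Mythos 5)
Your proof is correct and follows exactly the paper's route: the paper's proof of this corollary is simply ``Apply Corollary \ref{9} and Corollary \ref{11},'' which is precisely the one-line arithmetic combination you carry out. Your additional observation that the identity reduces to $p^{2k}(p-2)=p-2$ and hence is genuinely special to $p=2$ is a nice supplement not present in the paper, but the core argument is the same.
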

\begin{proof}
Apply Corollary \ref{9} and Corollary \ref{11}.
\end{proof}

\noindent Next, we aim to determine the size of the order-divisor graph using the Handshaking Lemma. We start with an example to illustrate this.\\

\begin{example}
We have the  Table \ref{t1} which gives the degree of each vertex in $OD(\mathbb{Z}_6)$.
Using  Table \ref{t1} and the HandShaking Lemma,  we can easily compute the size of $OD(\mathbb{Z}_6)$, as follows:
\begin{eqnarray*}
		Size ~ of ~ OD(\mathbb{Z}_6) &=& \frac{5 + 4 +3 + 3 +3 +4}{2}\\
		&=& 11.
\end{eqnarray*}
By using the same technique as above, we determine the general formula to find the size of $OD(\mathbb{Z}_n)$.
\end{example}

\begin{theorem}\label{13}
The  size of $OD\big(\mathbb{Z}_n\big)$ is given by

$|E\big(OD(\mathbb{Z}_n)\big)|=\frac{1}{2}\left(\sum\limits_{m \rvert n}\bigg(m+\sum\limits_{\lambda|\frac{n}{m}}\phi\big
(\lambda m\big)-2\phi\big(m\big)\bigg)\phi\big(m\big)\right)$, where $\phi$ represents the Euler's totient function.
\end{theorem}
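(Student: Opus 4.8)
The plan is to compute the size of $OD(\mathbb{Z}_n)$ by summing the degrees of all vertices and dividing by $2$, exactly as the Handshaking Lemma prescribes and as the preceding example illustrates. Theorem \ref{4} already furnishes a closed-form expression for the degree of any vertex $x \in \mathbb{Z}_n$ of order $m$, namely $deg(x) = m - 2\phi(m) + \sum_{\lambda \mid \frac{n}{m}}\phi(\lambda m)$. The essential structural fact I would invoke is the one used throughout this section: for each divisor $m$ of $n$, there are exactly $\phi(m)$ elements of $\mathbb{Z}_n$ having order $m$ (cf. \cite[Theorem 4.4]{G}). Since every element of $\mathbb{Z}_n$ has order dividing $n$, the vertex set is partitioned according to the order $m$ ranging over the divisors of $n$.

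The key steps, in order, are as follows. First, I would write down the Handshaking Lemma in the form $|E(OD(\mathbb{Z}_n))| = \frac{1}{2}\sum_{x \in \mathbb{Z}_n} deg(x)$. Second, I would reorganize this sum over vertices into a sum over the possible orders $m$: grouping together all $x$ with $o(x) = m$, and noting that each such group contains exactly $\phi(m)$ vertices of identical degree, the inner sum collapses to $\phi(m)$ copies of the degree formula. This yields
\begin{equation*}
\sum_{x \in \mathbb{Z}_n} deg(x) = \sum_{m \mid n} \Bigl( m - 2\phi(m) + \sum_{\lambda \mid \frac{n}{m}}\phi(\lambda m)\Bigr)\phi(m).
\end{equation*}
Third, substituting back into the Handshaking expression produces precisely the claimed formula for $|E(OD(\mathbb{Z}_n))|$.

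This argument is essentially a bookkeeping step once Theorem \ref{4} is in hand, so there is no serious obstacle; the only point demanding care is the reindexing from a sum over individual vertices to a weighted sum over divisors. The crucial justification is that the degree $deg(x)$ depends on $x$ only through its order $m$, so every one of the $\phi(m)$ vertices of order $m$ contributes the identical value of Theorem \ref{4}'s formula, allowing the factor $\phi(m)$ to be pulled out cleanly. I would take mild care to confirm that the outer summation correctly ranges over \emph{all} divisors $m$ of $n$ (including $m = 1$, the identity, and $m = n$), since together these exhaust the vertex set without omission or overlap; this ensures the Handshaking sum accounts for every vertex exactly once and delivers the stated closed form.
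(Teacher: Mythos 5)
Your proposal is correct and follows exactly the paper's own route: combine the degree formula of Theorem \ref{4} with the fact that $\mathbb{Z}_n$ has precisely $\phi(m)$ elements of each order $m \mid n$, then apply the Handshaking Lemma. The paper's proof is just a terser statement of the same bookkeeping you carry out explicitly.
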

\begin{proof}
	We know that for each divisor $d$ of $n$, $\mathbb{Z}_n$ contains exactly $\phi(d)$ elements that have order $d$. Then apply Lemma \ref{4} and the well known Handshaking Lemma.
\end{proof}
\begin{corollary}\label{14}
For every prime number $p$ and  each positive integer $k$, the size of $OD(\mathbb{Z}_{p^k})$ is given by

$|E\big(OD(\mathbb{Z}_{p^k})\big)|=\frac{p^{2k}-1}{p+1}$.
\end{corollary}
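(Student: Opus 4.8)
The plan is to specialize the general size formula from Theorem~\ref{13} to the case $n = p^k$ and then evaluate the resulting sum in closed form. First I would note that the divisors of $n = p^k$ are exactly $1, p, p^2, \dots, p^k$, so the outer sum $\sum_{m \mid n}$ becomes a finite sum over $m = p^i$ for $i = 0, 1, \dots, k$. This replaces the abstract divisor sum with a concrete sum indexed by $i$, which is the key structural simplification.

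Second, I would substitute the degree expression already computed in Corollary~\ref{6}: for $x \in \mathbb{Z}_{p^k}$ of order $m = p^i$, we have $\deg(x) = p^k + p^{i-1} - p^i$ when $i \geq 1$ and $\deg(x) = p^k - 1$ when $i = 0$. Since $\mathbb{Z}_{p^k}$ contains exactly $\phi(p^i)$ elements of order $p^i$, the Handshaking Lemma gives
\begin{equation*}
|E(OD(\mathbb{Z}_{p^k}))| = \frac{1}{2}\sum_{i=0}^{k} \deg(p^i)\,\phi(p^i) = \frac{1}{2}\sum_{x \in \mathbb{Z}_{p^k}} \deg(x).
\end{equation*}
But this last sum is exactly the quantity computed in Corollary~\ref{7}, namely $\frac{2p^{2k}-2}{p+1}$. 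So in fact the cleanest route is simply to observe that the size is half the total degree sum, and invoke Corollary~\ref{7} directly.

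Thus the most efficient proof is a two-line deduction: by the Handshaking Lemma, $|E(OD(\mathbb{Z}_{p^k}))| = \tfrac{1}{2}\sum_{x \in \mathbb{Z}_{p^k}} \deg(x)$, and by Corollary~\ref{7} this sum equals $\frac{2p^{2k}-2}{p+1}$, giving
\begin{equation*}
|E(OD(\mathbb{Z}_{p^k}))| = \frac{1}{2}\cdot\frac{2p^{2k}-2}{p+1} = \frac{p^{2k}-1}{p+1}.
\end{equation*}
Alternatively one could derive it independently of Corollary~\ref{7} by plugging Corollary~\ref{6} into Theorem~\ref{13} and summing the geometric-type series directly; the main obstacle in that independent approach is the telescoping/geometric manipulation of $\sum_i (p^k + p^{i-1} - p^i)\phi(p^i)$, but that computation is precisely the one already carried out in the proof of Corollary~\ref{7}. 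Since the total degree sum is available, the honest recommendation is to cite Corollary~\ref{7} and halve it, making this corollary essentially immediate and free of any genuine difficulty.
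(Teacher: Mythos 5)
Your proof is correct and matches the paper's own argument exactly: the paper also proves this by applying Corollary~\ref{7} together with the Handshaking Lemma, so the size is half of $\frac{2p^{2k}-2}{p+1}$. The alternative route via Theorem~\ref{13} that you mention is unnecessary, as you correctly observe.
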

\begin{proof}
	Apply Corollary \ref {7} and the well known Handshaking Lemma.
\end{proof}

\subsection{The Degree and Size within $OD(D_n)$ for the Dihedral Group $D_n=<a,b \mid a^n=b^2=(ab)^2=e>$ }\,

\noindent We describe here the  degree of vertices
and size for the order-divisor graph of the dihedral group. We initiate our
discussion with the aid of an example.

\begin{example}
Consider the order-divisor graphs $OD(\mathcal{D}_4)$ and $OD(\mathcal{D}_5)$ shown in Figure \ref{f2} and Figure \ref{f3} respectively.
We compute directly the degree of each vertex in these graphs as shown in Table \ref{t2} and Table \ref{t3}.
\begin{center}
\begin{figure}[!h]
\includegraphics[width=0.27\textwidth]{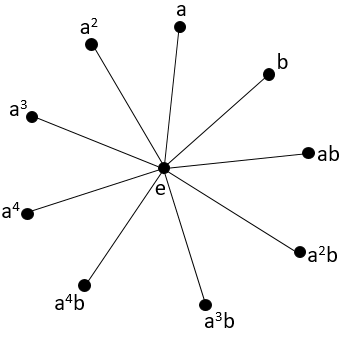}
\caption{OD($\mathcal{D}_5$)}
\label{f3}
\end{figure}
\begin{figure}[!h]
\includegraphics[width=0.27\textwidth]{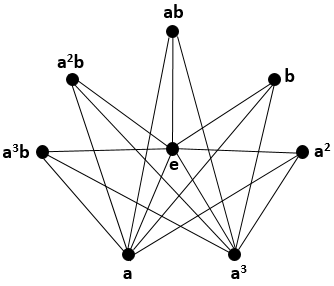}
\caption{OD($\mathcal{D}_4$)}
\label{f4}
\end{figure}
\end{center}

\begin{center}
\begin{table}[!h]
\begin{tabular}{|c|r|c|}
      			\hline
      			vertex & order & degree \\
      			\hline
      			$e$ & $1$ & $7$ \\
      			\hline
      			$a$ & $4$ & $6$\\
      			\hline
      			$a^2$ & $2$ & $3$\\
      			\hline
      			$a^3$ & $4$ & $6$\\
      			\hline
      			$b$ & $2$ & $3$\\
      			\hline
      			$ab$ & $2$ & $3$\\
      			\hline
      			$a^2b$ & $2$ & $3$\\
      			\hline
      			$a^3b$ & $2$ & $3$\\
      			\hline
\end{tabular}
\caption{Degree $\&$ Order for $OD(\mathcal{D}_4)$}
\label{t2}
\end{table}
\begin{table}[!h]
\begin{tabular}{|c|r|c|}
			\hline
			vertex & order & degree \\
			\hline
			$e$ & $1$ & $9$ \\
			\hline
			$a$ & $5$ & $1$\\
			\hline
			$a^2$ & $5$ & $1$\\
			\hline
			$a^3$ & $5$ & $1$\\
			\hline
			$a^4$&$5$&$1$\\
			\hline
			$ab$&$2$&$1$\\
			\hline
			$a^2b$ & $2$ & $1$\\
			\hline
			$a^3b$ & $2$ & $1$\\
			\hline
			$a^4b$ & $2$ & $1$\\
			\hline
			$b$ & $2$ & $1$\\
			\hline
\end{tabular}
\caption{Degree $\&$ Order for $OD(\mathcal{D}_5)$}
\label{t3}
\end{table}
\end{center}
\end{example}

\begin{remark}
It is quite difficult to compute directly the degree of each vertex in $OD(D_n)$ for the large
values of $n$. Hence there is a need to find some explicit formula to determine the degree without actually drawing $O(D_n)$. In this section,
we introduce a general formula for calculating the degree of each vertex  in $O(D_n)$.

\end{remark}
\begin{theorem}\label{5}
	Let $n$ be an odd positive integer with $n \geq 3$ and let  $x \in D_n$ has order $m$. Then\\
		\[deg(x)=
	\begin{cases}
		m - 2\phi(m) + \sum\limits_{\lambda \rvert \frac{n}{m}}\phi(\lambda m), & \text{if $m\textgreater 2$} \\
		2n-1, & \text{if $m = 1$}\\
		1, & \text{if $m = 2$},
	\end{cases}\]
where $\phi$ represents the Euler's totient function.
\end{theorem}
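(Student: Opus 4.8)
The plan is to partition $D_n$ according to element type and exploit the fact that, for odd $n$, the rotations and the reflections occupy disjoint ranges of orders. First I would record the order of every element. The rotation subgroup $\langle a\rangle$ is isomorphic to $\mathbb{Z}_n$, so its elements realize exactly the divisors of $n$ as their orders, and since $n$ is odd every such order is odd. Each of the $n$ reflections $a^k b$ satisfies $(a^k b)^2 = e$ (using $ba=a^{-1}b$), hence has order exactly $2$. Consequently the only even order occurring in $D_n$ is $2$, attained solely by reflections, while the identity is the unique element of order $1$. This census is the backbone of the whole argument.

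The two extreme cases are then immediate. For $m=1$ the vertex is the identity, which is adjacent to all $2n-1$ other vertices, so $\deg(x)=2n-1$. For $m=2$ the vertex $x$ is a reflection, and a neighbour $y$ must satisfy $o(y)\neq 2$ together with $o(y)\mid 2$ or $2\mid o(y)$. The condition $o(y)\mid 2$ with $o(y)\neq 2$ forces $o(y)=1$, i.e. $y=e$, whereas $2\mid o(y)$ is impossible because every non-identity element other than the reflections has odd order, and the reflections themselves are excluded as they share the order $2$. Hence $x$ is adjacent only to $e$, giving $\deg(x)=1$.

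The substantive case is $m>2$, where the key idea is a reduction to the cyclic formula of Theorem \ref{4}. Since $m\mid n$ with $n$ odd, $m$ is odd, so $2\nmid m$; and $m>2$ gives $m\nmid 2$. Therefore no reflection (order $2$) can be adjacent to $x$, so every neighbour of $x$ lies in the rotation subgroup $\langle a\rangle\cong\mathbb{Z}_n$. Moreover the adjacency relation restricted to $\langle a\rangle$ is precisely that of $OD(\mathbb{Z}_n)$. Thus the neighbourhood of $x$ in $OD(D_n)$ coincides with its neighbourhood in $OD(\mathbb{Z}_n)$, and Theorem \ref{4} yields $\deg(x)=m-2\phi(m)+\sum_{\lambda\mid\frac{n}{m}}\phi(\lambda m)$ directly. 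For a self-contained count one uses that $\mathbb{Z}_n$ has exactly $\phi(d)$ elements of each order $d\mid n$: the lower degree is $\sum_{d\mid m}\phi(d)-\phi(m)=m-\phi(m)$, the upper degree is $\sum_{\lambda\mid\frac{n}{m}}\phi(\lambda m)-\phi(m)$, and summing gives the same expression.

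I expect the only step demanding care to be the $m>2$ analysis, specifically the verification that reflections contribute nothing. This rests entirely on the parity observation that $2\nmid m$ and $m\nmid 2$, which decouples the reflection vertices from the rotation vertices and collapses the dihedral count to the cyclic one. Once this decoupling is in place, the formula follows at once from Theorem \ref{4}.
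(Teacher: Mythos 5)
Your proposal is correct and follows essentially the same route as the paper: handle $m=1$ and $m=2$ directly from the structure of $D_n$ for odd $n$, and for $m>2$ reduce to the cyclic count of Theorem \ref{4} via the lower/upper degree decomposition. You are somewhat more explicit than the paper in justifying that reflections contribute nothing when $m>2$ (since $2\nmid m$ and $m\nmid 2$), which is a worthwhile clarification but not a different argument.
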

\begin{proof}
When $m$ $> 2$, then $x \in <a>$.   Hence,
\begin{eqnarray*}
	Ldeg(x) &=& \sum\limits_{d \rvert m}\phi(d) - \phi(m)\\
	&=& m - \phi(m).
\end{eqnarray*}	
and
\begin{eqnarray*}
	Udeg(x) &=& \sum\limits_{\lambda m \rvert n}\phi(\lambda m) - \phi(m)\\
	&=& \sum\limits_{\lambda \rvert \frac{n}{m}}\phi(\lambda m) - \phi(m).
\end{eqnarray*}
Hence,
\begin{eqnarray*}
	deg(x) &=& Ldeg(x) + Udeg(x)\\
	&=& m - \phi(m) + \sum\limits_{\lambda \rvert \frac{n}{m}}\phi(\lambda m) - \phi(m)\\
	deg(x)	&=& m - 2\phi(m) + \sum\limits_{\lambda \rvert \frac{n}{m}}\phi(\lambda m).
\end{eqnarray*}
When $m=1$, then $x$ is the identity element and so $deg(x)=2n-1$.  Also when $m=2$, then since $n$ is odd, the identity is the only vertex adjacent with $x$.
Hence, in this case,
$deg(x)=1$.
\end{proof}

\begin{theorem}\label{8}
Let $n$ be an even positive integer with $n \geq 3$ and let  $x \in D_n$ has order $m$. Then
\[deg(x)=
\begin{cases}
m - 2\phi(m) + \sum\limits_{\lambda \rvert \frac{n}{m}}\phi(\lambda m), & \text{if $m$ is odd}; \\
\sum\limits_{\lambda \rvert \frac{n}{2}}\phi(2\lambda), & \text{if $m = 2$}; \\
n+m - 2\phi(m) + \sum\limits_{\lambda \rvert \frac{n}{m}}\phi(\lambda m) 	, & \text{if $m$ is even, $m \textgreater 2$};\\
2n-1, & \text{if $m = 1$},
\end{cases}\]
where $\phi$ represents the Euler's totient function.
\end{theorem}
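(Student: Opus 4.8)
The plan is to split the argument into the four cases on $m$, after recording the structural facts about $D_n$. Recall that $D_n$ consists of the $n$ rotations $e, a, \dots, a^{n-1}$, forming the cyclic subgroup $\langle a\rangle \cong \mathbb{Z}_n$, together with the $n$ reflections $b, ab, \dots, a^{n-1}b$, each of order $2$. By the counting fact used in Theorem~\ref{4}, for each $d \mid n$ the subgroup $\langle a\rangle$ contains exactly $\phi(d)$ rotations of order $d$. The crucial difference from the cyclic case is that, since $n$ is even, $\langle a\rangle$ contains the rotation $a^{n/2}$ of order $2$; together with the $n$ reflections this gives $n+1$ elements of order $2$, while every element of order $1$, of odd order, or of even order exceeding $2$ is necessarily a rotation. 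Since adjacency in $OD(D_n)$ depends only on element orders, I would compute $deg(x) = Ldeg(x) + Udeg(x)$ by counting, for each admissible order, how many group elements attain it.

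The cases $m = 1$ and $m$ odd should then be immediate. When $m = 1$ the vertex $x = e$ is adjacent to all $2n-1$ other vertices, so $deg(x) = 2n-1$. When $m$ is odd (hence $m \geq 3$ and $x \in \langle a\rangle$), every proper divisor of $m$ and every proper multiple of $m$ is odd or exceeds $2$, so the only vertices below or above $x$ are rotations and the reflections contribute nothing; the computations of $Ldeg(x)$ and $Udeg(x)$ therefore reproduce verbatim the calculation in Theorem~\ref{4}, giving $deg(x) = m - 2\phi(m) + \sum_{\lambda \mid n/m}\phi(\lambda m)$.

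The substance of the theorem lies in the two even cases, where the reflections enter. For $m$ even with $m > 2$, the vertex $x$ is again a rotation, and $Udeg(x) = \sum_{\lambda \mid n/m}\phi(\lambda m) - \phi(m)$ exactly as before, since any vertex above $x$ has order a multiple of $m > 2$ and is hence a rotation. For $Ldeg(x)$, however, the condition $o(y) \mid m$ now captures the order-$2$ elements because $2 \mid m$: beyond the $m - \phi(m)$ rotations of order properly dividing $m$, all $n$ reflections qualify, contributing an extra $+n$. Summing yields $deg(x) = n + m - 2\phi(m) + \sum_{\lambda \mid n/m}\phi(\lambda m)$. For $m = 2$, the only vertex below $x$ is the identity, so $Ldeg(x) = 1$, whereas the vertices above $x$ are precisely the rotations of even order exceeding $2$, so $Udeg(x) = \sum_{d \mid n,\,d\text{ even},\,d>2}\phi(d)$.

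I expect the main obstacle to be the order-$2$ bookkeeping rather than any deep idea: one must avoid double-counting $a^{n/2}$ against the reflections and correctly recognize that the reflections become lower neighbors of every even-order rotation exceeding $2$. The final technical step is to reconcile the $m = 2$ expression with the stated closed form $\sum_{\lambda \mid n/2}\phi(2\lambda)$, which I would do through the bijection $d = 2\lambda$ between the divisors $\lambda$ of $n/2$ and the even divisors $d$ of $n$; under it $\lambda = 1$ gives $\phi(2) = 1$ and $\lambda > 1$ gives $d > 2$, so $1 + \sum_{d \mid n,\,d\text{ even},\,d>2}\phi(d) = \sum_{\lambda \mid n/2}\phi(2\lambda)$. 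A check against Table~\ref{t2} for $OD(D_4)$, where every order-$2$ vertex has degree $3 = \phi(2) + \phi(4)$, confirms the formula.
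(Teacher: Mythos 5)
Your proposal is correct and follows essentially the same route as the paper: split on the parity and size of $m$, compute $Ldeg(x)$ and $Udeg(x)$ using the fact that $\langle a\rangle$ contains exactly $\phi(d)$ elements of order $d$ for each $d\mid n$, and account for the $n$ reflections (plus $a^{n/2}$) as the extra order-$2$ elements that become lower neighbours exactly when $m$ is even and exceeds $2$. Your reindexing $d=2\lambda$ for the $m=2$ case is the same identity the paper uses implicitly when it writes $Udeg(x)=\sum_{\lambda\mid n/2}\phi(2\lambda)-1$, so there is nothing substantively different to report.
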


\begin{proof}
Suppose $m$ is odd. Then $x  \in <a>=\{e,a,a^2,...,a^{n-1}\}$ and $x$ cannot be adjacent to any vertex in $\{b,ab,a^2b,...,a^{n-1}b\}$.
 Hence \begin{eqnarray*}
	Ldeg(x) &=& \sum\limits_{d \rvert m}\phi(d) - \phi(m)\\
	&=& m - \phi(m).
\end{eqnarray*}	
and
\begin{eqnarray*}
	Udeg(x) &=& \sum\limits_{\lambda m \rvert n}\phi(\lambda m) - \phi(m)\\
	&=& \sum\limits_{\lambda \rvert \frac{n}{m}}\phi(\lambda m) - \phi(m).
\end{eqnarray*}
Hence,
\begin{eqnarray*}
	deg(x) &=& Ldeg(x) + Udeg(x)\\
	&=& m - \phi(m) + \sum\limits_{\lambda \rvert \frac{n}{m}}\phi(\lambda m) - \phi(m)\\
	deg(x)	&=& m - 2\phi(m) + \sum\limits_{\lambda \rvert \frac{n}{m}}\phi(\lambda m).
\end{eqnarray*}

\noindent Suppose $m=2$. Then $x$ cannot be adjacent to any  vertx in $\{b,ab,ab^2,...,ab^{n-1}\}$. Hence $Ldeg(x)=1$. Thus

\begin{eqnarray*}
	deg(x) &=& Ldeg(x) + Udeg(x)\\
	&=& 1 + \sum\limits_{\lambda \rvert \frac{n}{m}}\phi(\lambda m) - 1=\sum\limits_{\lambda \rvert \frac{n}{m}}\phi(\lambda m).
\end{eqnarray*}

\noindent Suppose $m>2$ is even. Then $x \in <a>=\{e,a,a^2,..,a^{n-1}\}$ and all the vertices in $\{b,ab,a^2b,..,a^{n-1}b\}$ are lower vertices  adjacent to $x$.

Hence \begin{eqnarray*}
	Ldeg(x) &=& n+\sum\limits_{d \rvert m}\phi(d) - \phi(m)\\
	&=& n+m - \phi(m).
\end{eqnarray*}	
and
\begin{eqnarray*}
	Udeg(x) &=& \sum\limits_{\lambda m \rvert n}\phi(\lambda m) - \phi(m)\\
	&=& \sum\limits_{\lambda \rvert \frac{n}{m}}\phi(\lambda m) - \phi(m).
\end{eqnarray*}
Hence,
\begin{eqnarray*}
	deg(x) &=& Ldeg(x) + Udeg(x)\\
	&=& n+m - \phi(m) + \sum\limits_{\lambda \rvert \frac{n}{m}}\phi(\lambda m) - \phi(m)\\
	deg(x)	&=& n+m - 2\phi(m) + \sum\limits_{\lambda \rvert \frac{n}{m}}\phi(\lambda m).
\end{eqnarray*}
When $m=1$, then $x$ is the identity element and so $deg(x)=2n-1$.
\end{proof}

\begin{theorem}\label{15}
Let $n$ be an odd  integer with $n \geq 3$. Then
the size  of  $OD\big(D_n\big)$ is given by

$|E(OD(D_n))|=\frac{1}{2}\left(3n-1+\sum\limits_{\substack {m \mid n \\m > 1}}\bigg(m-
2\phi\big(m\big)+\sum\limits_{\lambda|\frac{n}{m}}\phi\big(\lambda m\big)\bigg)\phi\big(m\big)\right)$, where $\phi$ represents the Euler's totient function.
\end{theorem}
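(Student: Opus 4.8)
The plan is to apply the Handshaking Lemma, which gives $|E(OD(D_n))| = \frac{1}{2}\sum_{x \in D_n} \deg(x)$, and then to evaluate the degree sum by grouping the $2n$ elements of $D_n$ according to their orders and inserting the degree values from Theorem \ref{5}.

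First I would partition $D_n$ into three classes. The identity is the unique element of order $1$. The rotation subgroup $\langle a \rangle$ is cyclic of order $n$, so for each divisor $m$ of $n$ it contains exactly $\phi(m)$ elements of order $m$ (the standard count of elements of a given order in a cyclic group). The remaining $n$ elements $b, ab, \dots, a^{n-1}b$ are reflections, each of order $2$. The one point requiring care is the order-$2$ count: since $n$ is odd, no rotation can have order $2$, so every divisor $m > 1$ of $n$ satisfies $m \geq 3 > 2$. This has two consequences that keep the bookkeeping clean — the $n$ reflections are \emph{exactly} the order-$2$ vertices, and the ``$m > 2$'' branch of Theorem \ref{5} applies uniformly to all $\phi(m)$ rotations of each order $m > 1$.

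Next I would substitute the degrees. The identity contributes $2n - 1$; each of the $n$ reflections contributes $1$; and for every divisor $m > 1$ of $n$, the $\phi(m)$ rotations of order $m$ each contribute $m - 2\phi(m) + \sum_{\lambda \mid \frac{n}{m}} \phi(\lambda m)$. Summing over all vertices yields
\begin{equation*}
\sum_{x \in D_n} \deg(x) = (2n - 1) + n + \sum_{\substack{m \mid n \\ m > 1}} \left( m - 2\phi(m) + \sum_{\lambda \mid \frac{n}{m}} \phi(\lambda m) \right) \phi(m).
\end{equation*}
Combining the identity and reflection contributions as $(2n - 1) + n = 3n - 1$, and then dividing by $2$ in accordance with the Handshaking Lemma, produces exactly the claimed expression for $|E(OD(D_n))|$.

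This argument is essentially careful accounting rather than a deep computation, so there is no serious obstacle. The only step that genuinely uses the hypothesis $n$ odd is the order-$2$ analysis: one must confirm that $2 \nmid n$ forces all nontrivial rotations into the single ``$m > 2$'' degree formula and identifies the reflections with the complete set of order-$2$ elements. Verifying this guarantees that the summation index $m \mid n,\ m > 1$ enumerates every non-identity rotation with neither omission nor double counting, which is what makes the final formula collapse to the stated form.
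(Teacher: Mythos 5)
Your proposal is correct and follows the same route as the paper: identify the $n$ reflections as precisely the order-$2$ elements (using that $n$ is odd), count $\phi(m)$ rotations of each order $m \mid n$, substitute the degrees from Theorem~\ref{5}, and apply the Handshaking Lemma. Your write-up simply makes explicit the bookkeeping that the paper's two-line proof leaves to the reader, including the observation that $(2n-1)+n=3n-1$ accounts for the identity and the reflections.
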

\begin{proof}
Since, $n$ is odd, the elements with order $2$ are $\{b,ab,a^2b,...,a^{n-1}b\}$.
Moreover in the cyclic part $<a>$ of $D_n$, for each divisor $m$ of $n$,
there are exactly $\phi(m)$ elements of order $m$.
Then apply Theorem \ref{5} and the well known Handshaking Lemma.
\end{proof}

\begin{theorem}\label{16}
Let $n$  be an even  integer with $\geq 3$. Then
the size  of  $OD\big(D_n\big)$ is given by
\begin{eqnarray*}
|E(OD(D_n))|=\frac{1}{2}\left( \begin{aligned}& 2n-1+\big(n+1\big)\sum\limits_{\lambda \rvert \frac{n}{2}}
\phi(2 \lambda)+\sum\limits_{\substack {m > 1, m \mid n\\ m~is~odd}}\bigg(m-2\phi\big(m\big)+
\sum\limits_{\lambda|\frac{n}{m}}\phi\big(\lambda m\big)\bigg)\phi\big(m\big) \\ &+ \sum\limits_{\substack {m > 2, m \mid n\\ m~is~even}}\bigg(n+m-
2\phi\big(m\big)+\sum\limits_{\lambda|\frac{n}{m}}\phi\big(\lambda m\big)\bigg)\phi\big(m\big) \end{aligned}\right),
\end{eqnarray*}
where $\phi$ represents the Euler's totient function.
\end{theorem}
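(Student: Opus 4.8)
The plan is to compute the size via the Handshaking Lemma, exactly as in Theorem \ref{15}, so that $|E(OD(D_n))| = \frac{1}{2}\sum_{x \in D_n} \deg(x)$, and then to evaluate the right-hand sum by grouping the $2n$ vertices according to their order $m$ and inserting the four degree formulas supplied by Theorem \ref{8}. First I would fix the vertex census. The rotation subgroup $\langle a \rangle \cong \mathbb{Z}_n$ contributes exactly $\phi(m)$ elements of each order $m \mid n$; since $n$ is even, this includes the single rotation $a^{n/2}$ of order $2$. The $n$ reflections $\{b, ab, \ldots, a^{n-1}b\}$ each have order $2$. Hence the identity accounts for $1$ vertex, the order-$2$ class for $n+1$ vertices (the $n$ reflections together with $a^{n/2}$), each odd divisor $m > 1$ of $n$ for $\phi(m)$ vertices, and each even divisor $m > 2$ of $n$ for $\phi(m)$ vertices. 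A quick sanity check using $\sum_{m\mid n}\phi(m)=n$ gives $1 + (n+1) + \sum_{m>1,\,\mathrm{odd}}\phi(m) + \sum_{m>2,\,\mathrm{even}}\phi(m) = 2n$, confirming that every element is counted once.

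Next I would substitute the four cases of Theorem \ref{8} against these multiplicities. The identity ($m=1$) contributes $2n-1$. Every order-$2$ vertex has degree $\sum_{\lambda \mid n/2}\phi(2\lambda)$, and since both the reflections and $a^{n/2}$ obey this same formula, together they contribute $(n+1)\sum_{\lambda \mid n/2}\phi(2\lambda)$. Each rotation of odd order $m>1$ contributes $m - 2\phi(m) + \sum_{\lambda \mid n/m}\phi(\lambda m)$, weighted by the $\phi(m)$ such elements, and each rotation of even order $m>2$ contributes $n + m - 2\phi(m) + \sum_{\lambda \mid n/m}\phi(\lambda m)$, again weighted by $\phi(m)$. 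Summing these four blocks and halving produces precisely the stated expression.

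The step requiring the most care is the order-$2$ count: it is tempting to treat the reflections as the only involutions, but for even $n$ the rotation $a^{n/2}$ is also an involution, so the correct multiplicity is $n+1$ rather than $n$. This is exactly where the even case diverges from the odd case of Theorem \ref{15} (there $\langle a\rangle$ has no element of order $2$ and the involutions are the $n$ reflections alone), and it is why the $m=2$ degree formula must be applied uniformly to all $n+1$ such vertices. Beyond this piece of bookkeeping, the argument is a direct assembly of Theorem \ref{8} with the divisor-counting identity $\sum_{m\mid n}\phi(m)=n$ and the Handshaking Lemma, with no further estimation needed.
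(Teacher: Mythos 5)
Your proposal is correct and follows exactly the paper's route: count the vertices of each order (noting that for even $n$ the involutions are the $n$ reflections together with $a^{n/2}$, giving multiplicity $n+1$), apply the degree formulas of Theorem~\ref{8}, and finish with the Handshaking Lemma. Your write-up is in fact more detailed than the paper's own proof, which states only the element-order census and then cites Theorem~\ref{8} and the Handshaking Lemma.
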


\begin{proof}
Since, $n$ is even, so the elements with order $2$ are  $\{a^{n/2}, b,ab,a^2b,...,a^{n-1}b\}$.
Moreover in the cyclic part $<a>$ of $D_n$, for each divisor $m$ of $n$,
there are exactly $\phi(m)$ elements of order $m$.
 Then apply Theorem \ref{8} and the well known Handshaking Lemma.
\end{proof}

\section{PARI/GP codes to compute degree and size in order-divisor graphs of $\mathbb{Z}_n$ and $\mathcal{D}_n$}
 \noindent We can employ the computer algebra software PARI/GP \cite{P} to compute the degree of each vertex and the size within the order-divisor graph
  associated with the cyclic group $\mathbb{Z}_n$ and  the dihedral group $\mathcal{D}_n$. The successful verification of these codes
 is illustrated by the PARI/GP sessions conducted in a Windows environment, as depicted in Figures \ref{f5},  \ref{f6}
 \ref{f7}, \ref{f8},\ref{f10}, and \ref{f11}.

 \bigskip

\noindent  \hrulefill PARI/GP code to find degree of the vertex $x$ in $OD(\mathbb{Z}_n)$ \hrulefill

\begin{mdframed}
\begin{equation*}
\begin{aligned}
d(x,n)&=n/gcd(x,n) - 2*eulerphi(n/gcd(x,n))\\& + sumdiv(n/(n/gcd(x,n)), t, eulerphi(n/gcd(x,n)*t))
\end{aligned}
\end{equation*}
\end{mdframed}

\begin{center}
\begin{figure}[!h]
\includegraphics[width= 1\textwidth]{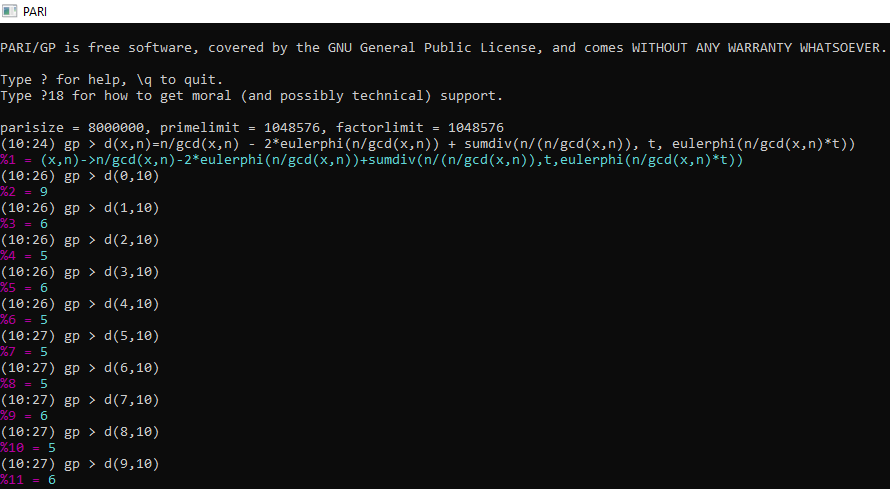}
\caption{PARI/GP session for computing degree in $OD(\mathbb{Z}_{10})$  under Windows.}
\label{f5}
\end{figure}
\end{center}

\noindent  \hrulefill PARI/GP code to find the size of  $OD(\mathbb{Z}_n)$ \hrulefill

\begin{mdframed}
\begin{equation*}
\begin{aligned}
s(n)&=(sumdiv(n, m, (m - 2*eulerphi(m) \\ &+ sumdiv(n/m, t, eulerphi(m*t)))*eulerphi(m)))/2
\end{aligned}
\end{equation*}
\end{mdframed}

\begin{center}
\begin{figure}[!h]
\includegraphics[width=0.7\textwidth]{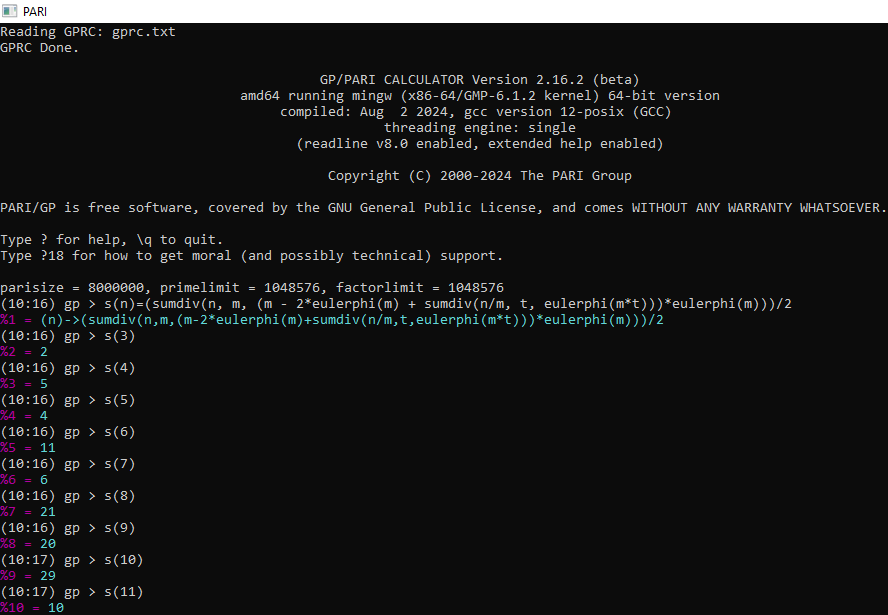}
\caption{PARI/GP session for computing size of $OD(\mathbb{Z}_n)$ under Windows.}
\label{f6}
\end{figure}
\end{center}

\noindent  \hrulefill PARI/GP code to find degree of the vertex $x$ of order $m$ in $OD(\mathcal{D}_n)$, where $n \geq 3$ is odd.\hrulefill

\begin{mdframed}
\begin{equation*}
\begin{aligned}
d(n,m)&=if(m>2, print(m-2*eulerphi(m) + sumdiv(n/m, t, eulerphi(m*t)),''~''),\\ & if(m==1, print(2*n-1,''~''), \\& if(m==2, print(1,''~''))))
\end{aligned}
\end{equation*}
\end{mdframed}

\begin{center}
\begin{figure}[!h]
\includegraphics[width=0.9\textwidth]{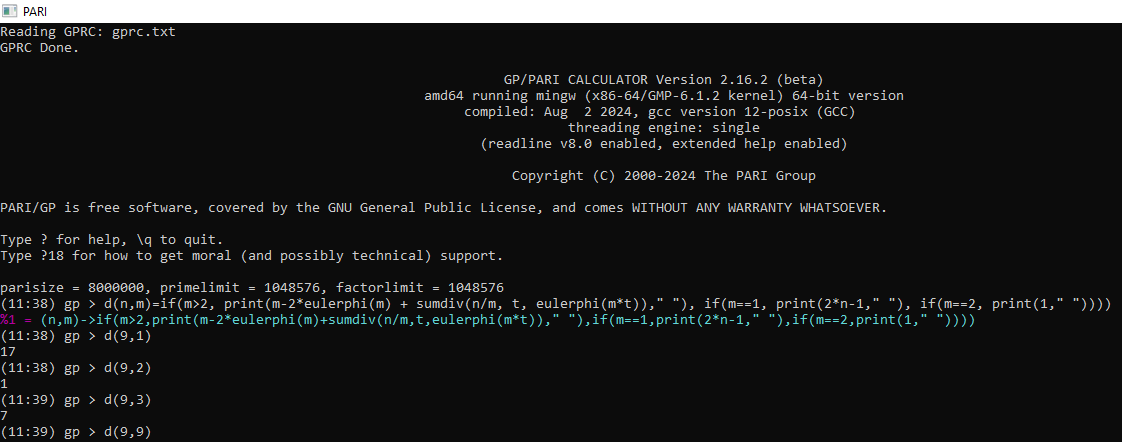}
\caption{PARI/GP session for computing degree in $OD(\mathcal{D}_9)$  under Windows.}
\label{f7}
\end{figure}
\end{center}

\medskip
\noindent  \hrulefill PARI/GP code to find degree of the vertex $x$ of order $m$ in $OD(\mathcal{D}_n)$, where $n \geq 3$ is even.\hrulefill

\begin{mdframed}
\begin{equation*}
\begin{aligned}
d(n,m)&=if(m\%2==1 ~\& \& ~m >1, print(m-2*eulerphi(m) + sumdiv(n/m, t, eulerphi(m*t)), ''~''),\\
& if (m==2, print(sumdiv(n/m, t, eulerphi(m*t)), ''~'' ),\\
& if (m\%2==0 ~\& \&~ m >>2, print(n+m-2*eulerphi(m)+sumdiv(n/m, t, eulerphi(m*t)), ''~''), \\
& if(m==1, print(2*n-1,''~'')))))
\end{aligned}
\end{equation*}
\end{mdframed}

\begin{center}
\begin{figure}[!h]
\includegraphics[width=0.7\textwidth]{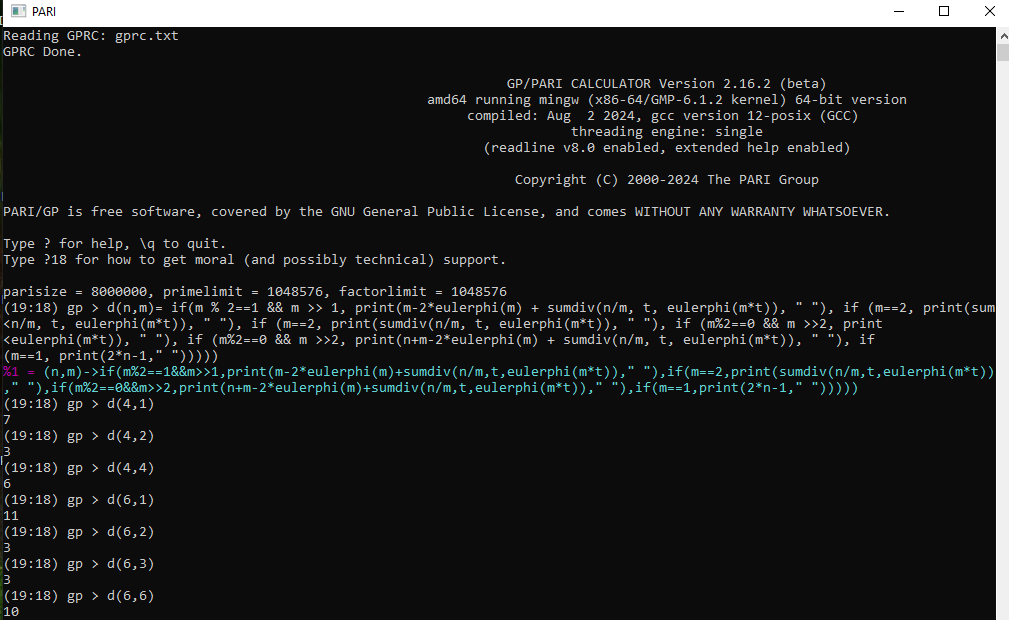}
\caption{PARI/GP session for computing degree in $OD(\mathcal{D}_4)$ and $OD(\mathcal{D}_6)$  under Windows.}
\label{f8}
\end{figure}
\end{center}

\noindent  \hrulefill PARI/GP code to find the size of  $OD(\mathcal{D}_n)$, where $n \geq 3$ is odd. \hrulefill

\begin{mdframed}
\begin{equation*}
\begin{aligned}
sd(n)&=(3*n-1+sumdiv(n, m, if (m>>1,  (m - 2*eulerphi(m) \\&+ sumdiv(n/m, t, eulerphi(m*t))))*eulerphi(m)))/2
\end{aligned}
\end{equation*}
\end{mdframed}

\begin{center}
\begin{figure}[!h]
\includegraphics[width=0.8\textwidth]{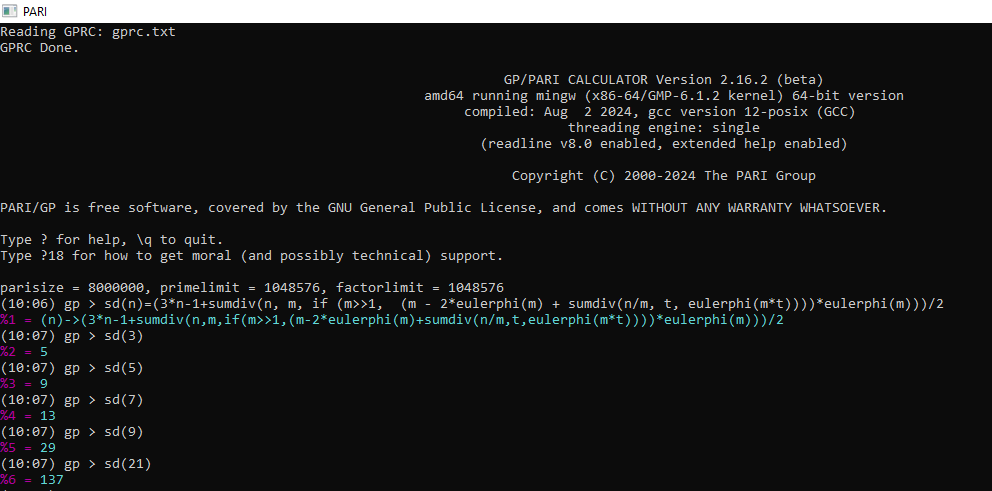}
\caption{PARI/GP session for computing size of $OD(\mathcal{D}_n)$, where $n$ is odd (under Windows).}
\label{f10}
\end{figure}
\end{center}

\noindent  \hrulefill PARI/GP code to find the size of  $OD(\mathcal{D}_n)$, where $n \geq 3$ is even. \hrulefill
\begin{mdframed}
\begin{equation*}
\begin{aligned}
sd(n)&=(2*n-1+(n+1)*sumdiv(n/2,t,eulerphi(2*t))\\
&+sumdiv(n,m,if(m>>1 ~\&\& ~m\%2==1,(m-2*eulerphi(m)\\
&+sumdiv(n/m,t,eulerphi(m*t))))*eulerphi(m))\\
&+sumdiv(n,m,if(m>>2~\&\&~m\%2==0,(n+m-2*eulerphi(m)\\
&+sumdiv(n/m,t,eulerphi(m*t))))*eulerphi(m)))/2
\end{aligned}
\end{equation*}
\end{mdframed}

\begin{center}
\begin{figure}[!h]
\includegraphics[width=0.8\textwidth]{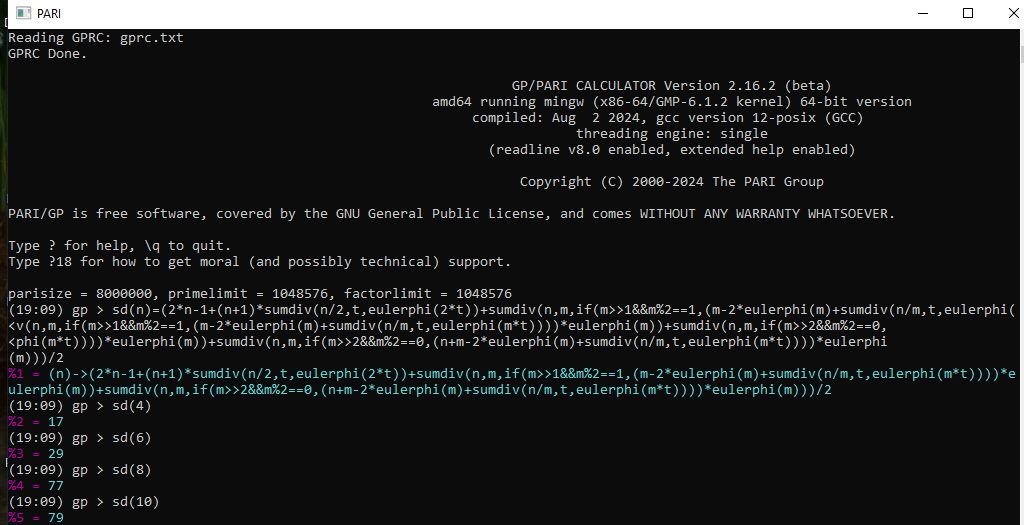}
\caption{PARI/GP session for computing size of $OD(\mathcal{D}_n)$, where $n$ is even (under Windows).}
\label{f11}
\end{figure}
\end{center}

\textbf{Acknowledgement:}

This research is supported
by HEC( Higher Education Commission) of Pakistan under NRPU Project No:14522.

$\\$
\end{document}